\documentclass{article}

\usepackage{iclr2027_conference,times}
\usepackage{amsmath,amssymb,amsthm,mathtools}
\usepackage{booktabs}
\usepackage{graphicx}
\usepackage{microtype}
\usepackage{enumitem}
\usepackage{array}
\usepackage{algorithm}
\usepackage{algpseudocode}
\usepackage{placeins}
\usepackage{tikz}
\usetikzlibrary{arrows.meta}
\usepackage{hyperref,url}
\hypersetup{hidelinks,pdfauthor={},pdftitle={No Universal Remainder Rate for Chambolle--Dossal Acceleration}}
\usepackage[nameinlink,capitalise,noabbrev]{cleveref}

\allowdisplaybreaks

\newtheorem{theorem}{Theorem}

\newtheorem{lemma}[theorem]{Lemma}

\theoremstyle{definition}

\newcommand{\R}{\mathbb R}

\newcommand{\dist}{\operatorname{dist}}
\newcommand{\proofstep}[2]{\par\addvspace{\medskipamount}\noindent\emph{#1. #2}\enspace\ignorespaces}

\title{No Universal Remainder Rate for\\Chambolle--Dossal Acceleration}
\usepackage{etoolbox}
\iclrfinalcopy
\makeatletter
\patchcmd{\@maketitle}
  {Published as a conference paper at ICLR 2027}
  {Preprint}
  {}
  {\PackageError{preprint}{Cannot replace the conference header}
    {Check the title macro before compiling the public version.}}
\g@addto@macro\@thanks{%
  \footnotetext[3]{Xinan Dai and Wenhao Deng contributed to this research
    during their internships at Westlake University.}%
}
\makeatother

\hypersetup{pdfauthor={Yuchen Yang, Xinan Dai, Wenhao Deng, Yingdong Shi, Feng Xu, Tailin Wu}}
\newcommand{\preprintauthorblock}{%
  \begin{minipage}[t]{\dimexpr\textwidth-2\tabcolsep\relax}
  \raggedright
  Yuchen Yang\textsuperscript{1,*}\hspace{0.7em}%
  Xinan Dai\textsuperscript{1,2,*,\ensuremath{\ddagger}}\hspace{0.7em}%
  Wenhao Deng\textsuperscript{1,3,\ensuremath{\ddagger}}\par\vspace{0.3ex}
  Yingdong Shi\textsuperscript{4}\hspace{0.7em}%
  Feng Xu\textsuperscript{5}\hspace{0.7em}%
  Tailin Wu\textsuperscript{1,\ensuremath{\dagger}}
  \par\vspace{0.6ex}
  \normalfont\small
  \textsuperscript{1}Department of Artificial Intelligence, School of Engineering,
  Westlake University, Hangzhou, China\par
  \textsuperscript{2}Key Laboratory for Information Science of Electromagnetic Waves,
  College of Future Information Technology, Fudan University, Shanghai, China\par
  \textsuperscript{3}University of Glasgow, Glasgow, United Kingdom\par
  \textsuperscript{4}School of Information Science and Technology,
  ShanghaiTech University, Shanghai, China\par
  \textsuperscript{5}School of Information Science and Technology,
  Fudan University, Shanghai, China\par
  \vspace{0.6ex}
  \textsuperscript{*}Equal contribution.\quad
  \textsuperscript{\ensuremath{\dagger}}Corresponding author:
  \href{mailto:wutailin@westlake.edu.cn}{\texttt{wutailin@westlake.edu.cn}}.
  \end{minipage}%
}
\author{\preprintauthorblock}

\date{}

\begin{document}
\maketitle

\begin{abstract}
Chambolle--Dossal acceleration guarantees $F(x_n)-F^*=o(n^{-2})$ for
every fixed smooth convex loss with a minimizer. We show that this
qualitative improvement admits no universal quantitative rate. For every
damping parameter $\alpha>3$ and positive nondecreasing gain
$G(n)\to\infty$, we construct a fixed one-dimensional smooth convex
loss whose exact CD orbit satisfies
\[
 \sup_{n\ge1} n^2G(n)\bigl(F(x_n)-F^*\bigr)=\infty.
\]
Thus no divergent gain improves the $n^{-2}$ scale for all fixed losses,
even with instance-dependent constants. The construction prescribes queried
gradients and realizes infinitely many slow blocks within one smooth
convex objective. Under local $p$-power growth with $p>2$ and sufficiently
strong damping, we also construct a fixed loss whose exact CD orbit satisfies
$F(x_n)-F^*\sim Dn^{-2p/(p-2)}$, $D>0$, establishing the sharpness of
the known convergence rate. Both main results are formally verified in Lean~4.
\end{abstract}

\section{Introduction}

The accelerated $O(n^{-2})$ function-value guarantee has a strict
fixed-instance refinement: Chambolle--Dossal (CD) acceleration achieves
$o(n^{-2})$ on every fixed smooth convex objective with a minimizer
\citep{Chambolle2015,AP16}. This means that the normalized residual
$n^2(F(x_n)-F^*)$ tends to zero. Does this vanishing factor admit any
common quantitative decay rate, with constants allowed to depend on the
objective and initialization? We show that it does not, even for
one-dimensional coercive losses with a prescribed smoothness budget and
initial distance.

Consider CD acceleration with damping $\alpha>3$.
For a convex loss $F$ with globally $L$-Lipschitz gradient and a fixed step
$0<s<L^{-1}$, its update is
\begin{equation}
\begin{aligned}
y_n&=x_n+\beta_n(x_n-x_{n-1}),&x_{n+1}&=y_n-s\nabla F(y_n),\\
\beta_n&=\frac{n-1}{n+\alpha-1},&n&\ge1.
\end{aligned}
\label{eq:algorithm-intro}
\end{equation}
The normalized residual $q_n=n^2(F(x_n)-F^*)$ tends to zero on every
fixed loss. A universal remainder rate would give $q_n=O(1/G(n))$ for
some common positive nondecreasing $G(n)\to\infty$, with
instance-dependent constants. For each such $G$, we construct one fixed
loss with $\sup_n G(n)q_n=\infty$. Thus no common gain exists, however
slowly it grows.

We prescribe nonincreasing queried gradients and invert the CD recurrence
to recover an exact orbit, using separated constant-gradient blocks to
defeat the proposed gain along a subsequence. The inverse construction
and choice of scales have precedents in scalar gradient flow
\citep[Lemma~2.7 and Example~2.9]{siegel2023qualitativedifferencegradientflows}. We impose weighted summability
to ensure convergence and construct the loss by interpolating its
derivative, with bounded slopes enforcing global smoothness
\citep[see also][for the general finite-data interpolation framework]{Taylor2017}.
The resulting loss is one-dimensional, even, coercive, and uniquely
minimized at zero; spatial rescaling sets any prescribed positive initial
distance without changing the gradient Lipschitz constant.

Using the same inverse-orbit construction with gradients that decay as a
power, we also obtain fixed losses whose exact CD orbits satisfy
$F(x_n)-F^*\sim Dn^{-2p/(p-2)}$, $D>0$. These instances establish the
sharpness of the known $O(n^{-2p/(p-2)})$ rate under local $p$-power
growth with suitable flatness or sufficiently strong damping.

We establish the following two sharpness results for CD acceleration.
Both conclusions are formally verified in Lean~4 \citep{10.1007/978-3-030-79876-5_37}; see the supplementary
materials in Appendix~\ref{app:supplement}.
\begin{itemize}
\item \textbf{No universal remainder rate on fixed losses.}
For every positive nondecreasing $G(n)\to\infty$, we construct one
fixed one-dimensional smooth convex loss whose exact CD orbit satisfies
\[
 F(x_n)-F^*=o(n^{-2}),\qquad
 \sup_{n\ge1}n^2G(n)\bigl(F(x_n)-F^*\bigr)=\infty.
\]
The smoothness constant and initial distance can be prescribed, and the
construction extends to every fixed finite dimension. To our knowledge,
this is the first no-universal-gain result for exact discrete CD iterates
on a single fixed scalar loss (\cref{thm:no-gain}).
\item \textbf{Sharp rates under local geometry.}
For every admissible parameter choice in \cref{thm:power}, we construct
fixed smooth convex losses with local $p$-power growth whose exact CD
orbits satisfy
\[
 F(x_n)-F^*\sim Dn^{-2p/(p-2)},\qquad D>0.
\]
These instances attain the established upper rates
\citep{Apidopoulos2021,aujol2024strongconvergencefistaiterates}. To our knowledge, they are the first exact
discrete constructions proving sharpness throughout both stated geometry
regimes, including endpoint flatness.
\end{itemize}

\section{Related work}\label{sec:related}

Our results connect four strands of work: accelerated convergence
guarantees, finite-horizon lower bounds and interpolation, slow convergence
on fixed objectives, and rates under local geometry. We distinguish
horizon-dependent worst cases from one fixed loss, and continuous
trajectories from exact discrete orbits.

\subsection{Accelerated convergence guarantees}

Nesterov's accelerated methods achieve the classical $O(n^{-2})$ bound
\citep{Nesterov04}, which FISTA extends to composite convex minimization
\citep{BT09}. \citet{Chambolle2015} modify the momentum schedule to establish
convergence of the iterates; \citet{AP16} prove the strict $o(n^{-2})$
function-value rate for $\alpha>3$. Weighted residual summability,
$\sum_n n(F(x_n)-F^*)<\infty$, is also established for this discrete
schedule \citep{Chambolle2015,JMLR:v17:15-084}. Neither little-$o$ convergence nor this
summability bound supplies a common eventual pointwise remainder rate.
Excluding every improved polynomial exponent would still be insufficient:
$1/(n^2\log^2(n+1))$, for example, admits a logarithmic gain while
failing every $O(n^{-2-\varepsilon})$ bound. \Cref{thm:no-gain} excludes
every prescribed divergent gain using one scalar loss, even when the
bound's constant and starting index may depend on that loss.

\subsection{Finite-horizon lower bounds and interpolation}

Oracle complexity gives worst-case lower bounds for prescribed iteration
budgets \citep{Nesterov04}. For oblivious first-order canonical linear
iterative methods, whose coefficients may depend on time and supplied
smoothness and strong convexity parameters, \citet{pmlr-v48-arjevani16} obtain
lower bounds valid in every fixed dimension.
Performance estimation optimizes over objectives and oracle data at a
given horizon \citep{Drori2014}; smooth convex interpolation makes this
approach constructive and yields exact worst-case analyses
\citep{taylor2017exact,Taylor2017}.

These finite-horizon lower bounds and performance-estimation analyses
allow the worst-case objective to depend on the budget. They do not by
themselves give $\forall G\,\exists F_G$ with unbounded
$n^2G(n)(F_G(x_n)-F_G^*)$ along one infinite CD orbit.
Interpolation provides an established criterion for compatibility of
first-order data. Our task is to design one infinite scalar sequence
that obeys the exact CD recurrence and produces the required slow blocks:
the query points remain ordered and accumulate at the minimizer, while
all interpolation slopes obey the prescribed Lipschitz bound.
Independently chosen finite-horizon worst-case traces need not satisfy
these joint requirements.

\subsection{Slow convergence on fixed objectives}

The closest antecedent is \citet{siegel2023qualitativedifferencegradientflows}. Their scalar inverse construction
and sparse-scale example \citep[Lemma~2.7 and Example~2.9]{siegel2023qualitativedifferencegradientflows} show
that gradient flow admits no common divergent gain over its $1/t$
scale, even in one dimension. Their slow inertial construction with an
attained minimum uses an infinite-dimensional quadratic objective
\citep[Lemma~4.2]{siegel2023qualitativedifferencegradientflows}.
These results already establish the principles of prescribing a
trajectory and placing slow portions at widely separated scales.

These constructions do not supply a scalar objective for the exact CD
recurrence. For damping $3/t$, the ODE approximation of
\citet[Theorem~2]{JMLR:v17:15-084} takes the step size to zero on bounded time
intervals; it does not transfer infinite-time lower asymptotics to a
fixed step size. We prescribe the queried gradients and solve the CD
velocity recurrence exactly. The compatibility estimates
combine weighted summability, $\sum_n n g_n<\infty$, with a lower bound
on the loss during each constant-gradient block and control of the
interpolation slopes. The automatic slope bound $1/s$ is insufficient
when $s<1/L$; the block construction enforces the smaller prescribed
bound $L$. These estimates allow one loss to retain infinitely many slow
blocks while its orbit converges to its unique minimizer
(\cref{sec:construction}).

\subsection{Local geometry and matching power rates}

Flatness and growth conditions quantify how the objective behaves near
its minimizers. For continuous inertial dynamics, power potentials with $p>2$
already give matching trajectories at the $t^{-2p/(p-2)}$ scale
\citep{Attouch2018}, and \citet{ADR19} develop sharp geometry-dependent ODE
rates. In discrete time, \citet{Apidopoulos2021} establish
$O(n^{-2p/(p-2)})$ under local $p$-power growth, suitable flatness,
uniqueness of the minimizer, and a damping threshold.
They also present numerical evidence for sharpness using power
potentials, while leaving discrete optimality unproved.
\citet[Theorem~1]{aujol2024strongconvergencefistaiterates} obtain the same exponent for coercive
objectives under local growth alone when $\alpha>5+8/(p-2)$, without
requiring flatness or a unique minimizer.

These upper bounds underlie \cref{thm:power}, but neither they nor the
continuous power trajectories supply an exact discrete attaining instance.
Our construction prescribes
$g_n=(n+M)^{-\rho}$, with $\rho=2+2/(p-2)$, and recovers one fixed
smooth loss with exact asymptotics
$F(x_n)-F^*\sim Dn^{-2p/(p-2)}$, $D>0$.
An additional step establishes endpoint flatness: the leading behavior
$F(x)\sim C|x|^p$ alone does not imply $xF'(x)\ge p(F(x)-F^*)$.
We choose the shift $M$ to give the first asymptotic correction the
required sign and control interpolation errors between query points
(Appendix~\ref{app:power}), yielding matching instances in both stated
regimes, including the flatness endpoint.

\section{Setting and main results}\label{sec:results}

\subsection{Objectives and the CD iteration}

We consider convex differentiable losses $F:\R^d\to\R$, in a fixed
finite dimension $d\ge1$, that attain a minimum $F^*$ and have a globally
$L$-Lipschitz gradient:
\[
 \|\nabla F(x)-\nabla F(z)\|\le L\|x-z\|
 \qquad\text{for all }x,z\in\R^d.
\]
Here $L>0$, and norms and inner products are Euclidean. We use $C^{1,1}$
for differentiability with a globally Lipschitz gradient, and $F'$ for the
derivative in dimension one. A loss is coercive if $F(x)\to\infty$ as
$\|x\|\to\infty$.

For damping $\alpha>3$, a fixed step $0<s<L^{-1}$, and initialization
$x_0,x_1\in\R^d$, the Chambolle--Dossal (CD) iteration is
\begin{equation}
 \begin{aligned}
 y_n&=x_n+\beta_n(x_n-x_{n-1}),&
 x_{n+1}&=y_n-s\nabla F(y_n),\\
 \beta_n&=\frac{n-1}{n+\alpha-1},& n&\ge1.
 \end{aligned}
 \label{eq:algorithm}
\end{equation}
Here $x_n$ is the iterate, $\beta_n$ the
momentum coefficient, and $y_n$ the extrapolated query point. Write
$g_n:=\nabla F(y_n)$ for the queried gradient. The objective error is
$r_n:=F(x_n)-F^*$ and the normalized residual is $q_n:=n^2r_n$.
Since $\beta_1=0$,
$x_0$ does not affect later iterates; our constructions use $x_0=x_1$.
All convergence rates are as $n\to\infty$.

\subsection{No universal remainder rate}

A gain is a positive nondecreasing sequence $G(n)\to\infty$.
A universal remainder rate would give $q_n=O(1/G(n))$ on every fixed
loss satisfying the assumptions above, for every CD initialization.
Both the constant and the starting index may depend on the loss and
initialization. The following theorem excludes such a rate.

\begin{theorem}[No universal gain]\label{thm:no-gain}
Fix $\alpha>3$, $L>0$, $0<s<L^{-1}$, an initial distance $R>0$,
and a gain $G$.
There is a fixed even, coercive convex loss $F:\R\to\R$ with an
$L$-Lipschitz derivative, uniquely minimized at zero with $F(0)=0$,
whose exact CD orbit from $x_0=x_1=R$ satisfies
\begin{equation}
 \sup_{n\ge1}n^2G(n)\bigl(F(x_n)-F^*\bigr)=\infty.
 \label{eq:no-gain}
\end{equation}
The construction extends to every fixed finite dimension, preserving
smoothness, coercivity, the unique minimizer, initial distance, and
objective errors.
\end{theorem}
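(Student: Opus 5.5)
We argue by an inverse construction in one dimension: we prescribe the queried gradients $g_n=F'(y_n)$ first, recover the orbit from the CD recurrence, and only then build $F$ by interpolating its derivative through the query points. Write $v_n:=x_n-x_{n-1}$. The recurrence gives $v_{n+1}=\beta_n v_n-sg_n$ with $v_1=0$, so
\[
 v_{n+1}=-s\sum_{k=1}^n\Bigl(\prod_{j=k+1}^n\beta_j\Bigr)g_k ,
\]
where $\prod_{j=k+1}^n\beta_j\asymp (k/n)^{\alpha}$. Fix a sequence $g_n\ge0$ that is nonincreasing and satisfies $\sum_n n g_n<\infty$. Set $x_n=R+\sum_{m\le n}v_m$, and choose the total mass so that $x_n\downarrow0$. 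This is arranged by scaling: $x_\infty=R-s\sum_k c_k g_k$ with $c_k\asymp k$, so we rescale $g$ to make $x_\infty=0$. We must also check that $x_n$ and $y_n=x_n+\beta_nv_n$ are positive and strictly decreasing. Positivity of the query points and the ordering $y_{n+1}<y_n$ follow from $v_n\le0$ together with monotone $g$, after a short estimate. Then $F'$ is defined on $[0,\infty)$ as the piecewise-linear interpolant of the points $(y_n,g_n)$ and $(0,0)$. It is extended linearly with slope $L$ beyond $y_1$ for coercivity, and $F$ is taken even, so $F(0)=0$. The derivative is nondecreasing in $x$ because $g$ is nonincreasing along decreasing $y_n$, so $F$ is convex, and zero is its unique minimizer once every $g_n>0$.

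The slow blocks are built at sparse scales $N_1\ll N_2\ll\cdots$. On $[N_j,2N_j]$ we take $g$ constant, equal to $\epsilon_j N_j^{-2}$, with $\epsilon_j\to0$ so slowly that $\sum_j\epsilon_j<\infty$. The sum $\sum_n ng_n$ contributed by block $j$ is about $\epsilon_j$. Between blocks, $g$ decays fast enough to keep the sum finite, and it remains nonincreasing because the block values decrease. During block $j$ the iterate still carries the mass $x_n\gtrsim s\sum_{k\ge n}k g_k\gtrsim \epsilon_j$, while $F'\ge\epsilon_jN_j^{-2}$ near $x_n$. Convexity then gives $F(x_n)\gtrsim \epsilon_j^2N_j^{-2}$ for $n\in[N_j,2N_j]$, so $q_n\gtrsim\epsilon_j^2$. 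We choose $N_j$ so large that $G(N_j)\epsilon_j^2\ge j$, which is possible because $G\to\infty$ and the $\epsilon_j$ are fixed in advance. This yields \eqref{eq:no-gain}. The required lower bound uses $F(x)\ge\int_{x/2}^{x}F'$, together with the estimate $x_n\gtrsim\epsilon_j$, which in turn needs a lower bound on $x_{2N_j}$ from the remaining tail mass.

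The main obstacle is the global $L$-Lipschitz bound on $F'$. The interpolation slopes are $(g_n-g_{n+1})/(y_n-y_{n+1})$. Inside a block they are zero. At block transitions, however, $g$ drops while $y$ moves by only about $|v_n|$, which can be tiny. The first remedy is to make the transitions gradual, with geometric ratios $g_{n+1}\ge(1-\delta)g_n$ over stretches long enough that $y_n-y_{n+1}\gtrsim s g_n/(\text{const})$. Near-constant gradients make the velocity comparable to $s\,n g_n/\alpha$, so the slopes are $\lesssim \delta/(s n)$, which is at most $L$ once $n$ is large. The first few indices are handled by starting the blocks late and by choosing a smooth initial segment, for instance the slow decay $g_n\propto (n+M)^{-3}$ with $M$ large, which gives small slopes. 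We verify that $y_n$ is decreasing and then bound the slopes uniformly. The rescaling $x\mapsto\lambda x$, $F\mapsto\lambda^2F(\cdot/\lambda)$ preserves $L$ and the orbit structure, so it sets the initial distance to $R$. Finally, the extension to $\R^d$ is obtained from $\tilde F(z)=F(z_1)+\tfrac L2\|z'\|^2$ with initial point $Re_1$: the orbit stays on the first axis, the objective errors are unchanged, and smoothness, coercivity and the unique minimizer are preserved.
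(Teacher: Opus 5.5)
The gap is in the central lower bound $F(x_{N_j})\gtrsim\epsilon_j^2N_j^{-2}$. The rest of your outline follows the paper: inverse orbit from prescribed gradients, constant blocks of height $c_j=\epsilon_jN_j^{-2}$ at sparse scales with $G(N_j)\epsilon_j^2\ge j$, rescaling to fix $R$, and a quadratic transverse embedding.

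Your gradual transitions are a valid alternative for the Lipschitz bound. Since $|v_{n+2}|\ge sg_{n+1}$, the slope is at most $(g_n-g_{n+1})/(sg_n)$, so $g_{n+1}\ge(1-sL)g_n$ already suffices at every index. The paper keeps abrupt drops instead. They are harmless because the velocity accumulated in block $j$ is at least $sc_jN_je^{-\alpha}$, so the tiny-spacing problem you anticipate does not occur.

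\textbf{Why your argument for the lower bound fails.} Knowing $x_n\gtrsim\epsilon_j$ and $F'(x_n)\ge c_j$ gives no lower bound on $F(x_n)=\int_0^{x_n}F'$; convexity at $x_n$ only bounds $F(x_n)$ from above. Your estimate $F(x)\ge\int_{x/2}^{x}F'$ would need $F'\ge c_j$ on $[x_n/2,x_n]$, that is, $y_{2N_j}\le x_n/2$. You never show this, and it can fail. Much of the remaining mass can lie below $y_{2N_j}$, in the coasting region after block $j$ and in all later blocks, where $F'$ is far below $c_j$. If $\epsilon_j$ decays slowly, as you intend, $\sum_{i>j}\epsilon_i$ can be much larger than $\epsilon_j$.

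Your own construction refutes the claim for all $n\in[N_j,2N_j]$. At $n=2N_j$ you have $x_{2N_j}\gtrsim\epsilon_j$ and $F'(x_{2N_j})=c_j$. But $F'=c_j$ only on $[y_{2N_j},x_{2N_j}]$, whose length is $\beta_{2N_j}|v_{2N_j}|=O(sc_jN_j)$. Below that, your geometric decay drives $F'$ down within $O(1/\delta)$ steps. Hence $F(x_{2N_j})=O(sc_j^2N_j/\delta)+O(c_{j+1}x_1)$, which is $o(\epsilon_j^2N_j^{-2})$ for well-separated scales. A lower bound on $x_{2N_j}$ is therefore the wrong quantity to pursue.

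\textbf{The missing step.} What you need is the spatial width of the constant-gradient segment, and it comes from momentum accumulating inside the block. For $N_j\ge8$ and $N_j\le i\le k+1\le2N_j$, the products $\prod_{l=i+1}^{k+1}\beta_l$ are at least $e^{-\alpha}$. So for $\lceil3N_j/2\rceil\le k\le2N_j-1$, the block terms alone give $|v_{k+2}|\ge sc_j\sum_{i=N_j}^{k+1}\prod_{l=i+1}^{k+1}\beta_l\ge\frac12se^{-\alpha}c_jN_j$. Since $y_k-y_{k+1}\ge|v_{k+2}|$, summing over these at least $N_j/3$ indices gives $y_{N_j}-y_{2N_j}\ge\frac16se^{-\alpha}c_jN_j^2$. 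Therefore $F(x_{N_j})\ge F(y_{N_j})-F(y_{2N_j})=c_j(y_{N_j}-y_{2N_j})\ge\frac{se^{-\alpha}}6\epsilon_j^2N_j^{-2}$. This estimate holds at the single index $n=N_j$, is independent of every other block, and needs no tail-mass bound; it is the paper's block lemma.
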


For each proposed $G$, one loss remains fixed along the entire orbit.
Its normalized residual satisfies $q_n\to0$ by the established
little-$o$ guarantee, while $G(n)q_n$ is unbounded along a subsequence.
Thus even an eventual bound with an instance-dependent constant fails.
The loss may depend on $G$: taking $G(n)=n^\varepsilon$, $\varepsilon>0$,
excludes any common improved polynomial exponent, and
$G(n)=\log(n+1)$ excludes a common logarithmic gain. The theorem also
covers arbitrarily slower divergent gains.

\subsection{Local geometry and sharp power rates}

Suppose first that $F$ has a unique minimizer $x^*$. For $\beta>2$, the
local flatness condition $H(\beta)$ means that, for some $\eta>0$,
\begin{equation}
 H(\beta):\qquad F(x)-F^*\le\frac1\beta\langle\nabla F(x),x-x^*\rangle
 \qquad\text{if }\|x-x^*\|\le\eta.
 \label{eq:H}
\end{equation}
For $p>2$, the local growth condition $L(p)$ means that, for some
$K,\varepsilon>0$,
\begin{equation}
 L(p):\qquad F(x)-F^*\ge K\|x-x^*\|^p
 \qquad\text{if }\|x-x^*\|\le\varepsilon.
 \label{eq:Lp}
\end{equation}
Both inequalities apply to every $x$ in the indicated neighborhood.
Here $\beta$ is a flatness exponent, distinct from the momentum
coefficient $\beta_n$.

To allow multiple minimizers, let $X^*$ denote the minimizers of $F$
and $\dist(x,X^*)$ the Euclidean distance from $x$ to the nearest
minimizer. For $p>2$, the local growth condition $L_{\mathrm{set}}(p)$
means that, for some $K,\varepsilon>0$,
\begin{equation}
 L_{\mathrm{set}}(p):\qquad F(x)-F^*\ge K\dist(x,X^*)^p
 \qquad\text{if }\dist(x,X^*)\le\varepsilon.
 \label{eq:Gp}
\end{equation}
The constants apply throughout this neighborhood of the minimizers.
When the minimizer is unique, $L_{\mathrm{set}}(p)$ reduces to $L(p)$.
\begin{theorem}[Sharp power rates under local geometry]\label{thm:power}
Fix $\alpha>3$, $L>0$, $0<s<L^{-1}$, and $p>2$. Let
$F:\R^d\to\R$ be convex, attain its minimum, and have an
$L$-Lipschitz gradient. For every CD initialization,
\begin{equation}
 F(x_n)-F^*=O\bigl(n^{-2p/(p-2)}\bigr)
 \label{eq:power-upper}
\end{equation}
holds under either of the following conditions:
\begin{enumerate}[label=(\roman*),leftmargin=*]
 \item $F$ has a unique minimizer and satisfies $H(\beta)$ and $L(p)$,
 with $2<\beta\le p$ and $\alpha\ge\frac{\beta+2}{\beta-2}$.
 \item $F$ satisfies $L_{\mathrm{set}}(p)$ and
 $\alpha>5+8/(p-2)$.
\end{enumerate}
In each regime, for every admissible parameter choice and every fixed
finite dimension, there is a coercive loss satisfying these assumptions
and having a unique minimizer, whose exact CD orbit for some
initialization satisfies
\begin{equation}
 F(x_n)-F^*\sim Dn^{-2p/(p-2)},\qquad D>0.
 \label{eq:power-sharp}
\end{equation}
Thus the upper rate is attained with a positive asymptotic constant
$D$, and no common divergent gain improves it. The constants and onset
of the upper bound may depend on the loss and initialization.
\end{theorem}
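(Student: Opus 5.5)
The upper bound \eqref{eq:power-upper} is quoted from the literature: regime (i) follows from \citet{Apidopoulos2021} and regime (ii) from \citet[Theorem~1]{aujol2024strongconvergencefistaiterates}. So the real work is the attaining construction \eqref{eq:power-sharp}. It is one-dimensional and extends to $\R^d$ by taking $F(x)=f(x_1)+\phi(x_2,\dots,x_d)$ with $\phi$ a quadratic-then-linear coercive function started at its minimizer. I would reuse the inverse-orbit idea behind \cref{thm:no-gain}: prescribe the queried gradients, solve the CD recurrence exactly for the orbit, and then build $F$ by interpolating its derivative through the points $(y_n,g_n)$.

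\emph{Step 1: exact orbit.} Set $\rho=2+2/(p-2)=2(p-1)/(p-2)$ and prescribe $g_n=(n+M)^{-\rho}$ with a shift $M$ to be chosen. Put $x_1=x_0=R>0$ and $v_n=x_n-x_{n-1}$. The recurrence becomes $v_{n+1}=\beta_nv_n-sg_n$, which gives $v_{n+1}=-s\sum_{k\le n}g_k\prod_{j=k+1}^n\beta_j$. Since $\prod_{j=k+1}^n\beta_j\asymp (k/n)^{\alpha}$, an Abel/Euler--Maclaurin expansion yields
\[
 v_{n+1}=-\tfrac{s}{\alpha+1-\rho}\,n^{1-\rho}\bigl(1+c_1(M)/n+O(n^{-2})\bigr).
\]
This needs $\alpha+1>\rho$, and the admissible damping guarantees it: in regime (ii) we have $\alpha>5+8/(p-2)>\rho$; in regime (i) I would check $(\beta+2)/(\beta-2)\ge(p+2)/(p-2)>\rho-1$. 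Choose $R=s\sum_n\bigl(-v_{n+1}/s\bigr)$ so that $x_n\to0$. Then $x_n=\sum_{k>n}|v_{k+1}|\sim c\,n^{2-\rho}=c\,n^{-2/(p-2)}$, and likewise $y_n\sim c\,n^{-2/(p-2)}$. Both sequences are positive and strictly decreasing for large $n$; by enlarging $M$ I would arrange this for all $n$.

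\emph{Step 2: the loss.} Define $f'$ on $[0,\infty)$ by $f'(0)=0$, $f'(y_n)=g_n$, linear between consecutive query points, and linear with slope $\le L$ beyond $y_1$. Extend oddly, so $f$ is even. Monotonicity of $g_n$ and of $y_n$ gives convexity. The slopes are $(g_n-g_{n+1})/(y_n-y_{n+1})\asymp n^{-\rho-1}/n^{1-\rho}\to0$, and the initial ones are made $\le L$ by scaling $s$-independent constants via $M$ large. This gives global $L$-smoothness, coercivity, and a unique minimizer at $0$. Since $g\approx c'y^{\rho/(\rho-2)}=c'y^{p-1}$ along the orbit, we get $f(x)\sim C|x|^p$, hence $L(p)$. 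Then $f(x_n)\sim C x_n^p\sim D n^{-2p/(p-2)}$, which is \eqref{eq:power-sharp}.

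\emph{Step 3: flatness, the main obstacle.} Regime (ii) needs nothing more. Regime (i) needs $H(\beta)$ for $\beta\le p$, and the endpoint $\beta=p$ is delicate: $f\sim C|x|^p$ alone does not give $xf'(x)\ge p f(x)$. I would expand $f'(y)=c'y^{p-1}\bigl(1+\kappa y^{\gamma}+\dots\bigr)$ in terms of the correction $c_1(M)/n$, with $\gamma=(p-2)/2$, and choose $M$ so that $\kappa>0$. Then
\[
 yf'(y)-pf(y)=c'\kappa\,\tfrac{\gamma}{p+\gamma}\,y^{p+\gamma}+\dots>0 .
\]
I also need to show that the piecewise-linear interpolation error, of relative size $O(n^{-2})=O(y^{p-2})$, is dominated by the $y^{\gamma}$ correction, which holds because $p-2>\gamma$. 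This is the step I expect to be hardest, because it requires tracking the sign of $c_1(M)$ as $M\to\infty$ in the velocity expansion. If that sign proves hard to pin down, my fallback would be to prescribe $g_n=(n+M)^{-\rho}(1+\delta/(n+M))$ with $\delta$ as an extra free parameter.
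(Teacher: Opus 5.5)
\textbf{Gap 1: the embedding into $\R^d$ fails in regime (i).} Your scalar witness is the paper's construction, and it works. The paper also uses shifted power gradients, the exact inverse orbit, and affine interpolation of $F'$. It likewise gets endpoint flatness from a positive first correction, with sign set by $M$, that dominates the $O(n^{-2})$ interpolation error. The coefficient of $M$ in the $1/n$ correction of $g_n/y_n^{p-1}$ is $\alpha\rho/[a(\rho-1)]>0$ with $a=\alpha-\rho$, so you do not need the fallback. The problem is that a transverse term that is quadratic near zero violates $H(\beta)$ for \emph{every} $\beta>2$. At $x=(0,z)$ with small $z\ne0$ you get $F(x)-F^*=c\|z\|^2$ but $\langle\nabla F(x),x\rangle=2c\|z\|^2$, so $H(\beta)$ would force $\beta\le2$. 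Hence for $d\ge2$ your loss does not satisfy the hypotheses of regime (i); it is fine for regime (ii). The transverse coordinates need $p$-power geometry near zero, with a capped derivative to keep global $L$-smoothness. For example, take $\varphi'(t)=\kappa\operatorname{sgn}(t)\min\{|t|^{p-1},R^{p-1}\}$ with $\kappa(p-1)R^{p-2}\le L$. Then $t\varphi'(t)=p\varphi(t)$ for $|t|\le R$, and summing the scalar inequalities gives $\langle\nabla\widetilde F(z),z\rangle\ge p\widetilde F(z)$ near zero. The comparison $\sum_i|z_i|^p\ge d^{1-p/2}\|z\|^p$ then gives $L(p)$.

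\textbf{Gap 2: the regime (ii) upper bound cannot simply be quoted.} \citet[Theorem~1]{aujol2024strongconvergencefistaiterates} assumes coercivity, but the statement covers noncoercive $F$ with possibly non-singleton $X^*$. Coercivity serves there to make the local growth inequality hold eventually along the orbit, so you must supply this another way. In finite dimension, CD convergence \citep{Chambolle2015} gives $x_n\to\bar x\in X^*$, hence $y_n\to\bar x$ and $\dist(x_n,X^*)\to0$. You then rerun the Lyapunov argument using the eventual bound $\dist(z_m,X^*)^2\le C\,(F(z_m)-F^*)^{2/p}$ twice. It bounds the energy increments, and it absorbs the negative distance term in the energy when you extract the residual rate. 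For both citations you also need the index shift $z_m=x_{m+1}$ to match the published coefficient $m/(m+\alpha)$.

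\textbf{Smaller points.} The $1/n$ expansion requires $\alpha>\rho$, not just $\alpha+1>\rho$. This does hold in both regimes, using $\alpha>3$ when $p\ge4$. The velocity remainder is in general only $o(n^{-1})$, not $O(n^{-2})$; for $0<a<1$ the lower-endpoint constants contribute relative order $n^{-1-a}$. This still suffices. Finally, the slope bound for small $n$ must be uniform in $M$. It follows from $y_n-y_{n+1}=|v_{n+2}|+s(g_n-g_{n+1})\ge sg_n$, which gives slopes at most $\rho/[s(n+M)]$.
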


The upper bounds in \cref{thm:power} build on established results.
\citet{Apidopoulos2021} prove the rate in part~(i) under flatness and growth
conditions. \citet{aujol2024strongconvergencefistaiterates} obtain the growth-only rate used in part~(ii)
with an additional coercivity assumption. We extend their upper bound
to the finite-dimensional setting of part~(ii) without requiring
coercivity.

Our contribution is a matching construction for every parameter choice
allowed by the theorem: one fixed smooth convex loss has an exact CD
orbit with $F(x_n)-F^*\sim Dn^{-2p/(p-2)}$, $D>0$.
These instances prove that the upper rate is sharp in both regimes.

As $p\to\infty$, the exponent $2p/(p-2)$ approaches $2$.
The case $p=2$ lies outside this power law.

\section{Constructing the fixed losses}\label{sec:construction}

This section constructs the fixed losses in \cref{thm:no-gain} and the
sharpness instances in \cref{thm:power}. The construction proceeds in
three steps: prescribe a sequence of queried gradients, recover the
corresponding CD orbit, and interpolate and integrate the prescribed
derivative values to obtain a smooth convex loss.
\Cref{sec:inverse-framework} develops this common procedure and explains
how it gives the required properties. \Cref{sec:blocks} uses separated
constant-gradient blocks to obtain the no-universal-gain instance, while
\cref{sec:power} uses a shifted power sequence to obtain a loss with the
exact power-rate asymptotic.

\subsection{Recovering the orbit and the loss}\label{sec:inverse-framework}

We reconstruct a fixed loss from a prescribed gradient sequence so that
the CD algorithm follows the designed orbit exactly. Choose $(g_n)_{n\ge1}$
with
\[
 g_n>0,\qquad g_{n+1}\le g_n,\qquad
 \sum_{n=1}^{\infty}ng_n<\infty.
\]
Write $v_n=x_{n-1}-x_n$ for the displacement towards zero. Recover the
displacements, iterates, and query points by
\begin{equation}
\begin{aligned}
 v_{n+1}&=\beta_nv_n+sg_n, & v_1&=0,\\
 x_n&=\sum_{j=n+1}^{\infty}v_j, & x_0&=x_1,\qquad
 y_n=x_n-\beta_nv_n .
\end{aligned}
\label{eq:inverse}
\end{equation}
These sequences satisfy
\[
 v_{n+1}>0,\qquad \sum_{n=1}^{\infty}v_{n+1}<\infty,
 \qquad x_n\downarrow0,\qquad y_n\downarrow0,
\]
with $x_{n+1}=x_n-v_{n+1}$ and $y_n-sg_n=x_{n+1}$.
Thus the total displacement is finite, and the query points form an
ordered mesh accumulating at zero.

Join consecutive points $(y_n,g_n)$ by line segments to define the
derivative on the positive half-axis, including its extension at zero
and beyond the first query:
\begin{equation}
 \psi(x)=
 \begin{cases}
  0, & x=0,\\[2pt]
  g_{n+1}+\dfrac{g_n-g_{n+1}}{y_n-y_{n+1}}(x-y_{n+1}),
    & y_{n+1}\le x\le y_n,\quad n\ge1,\\[5pt]
  g_1, & x\ge y_1.
 \end{cases}
 \label{eq:derivative-interpolant}
\end{equation}
Finally, define the loss on the whole real line by
\begin{equation}
 F(x)=\int_0^{|x|}\psi(u)\,du.
 \label{eq:integrated-loss}
\end{equation}
The resulting loss is even, coercive, convex, and $C^{1,1}$, with
unique minimizer zero. Its derivative is $L$-Lipschitz whenever
\[
 0\le\frac{g_n-g_{n+1}}{y_n-y_{n+1}}\le L
 \qquad(n\ge1).
\]
Each gradient schedule below enforces this bound. Moreover,
$F'(y_n)=g_n$ and $x_{n+1}=y_n-sF'(y_n)$, so the prescribed points
form an exact CD orbit of this one fixed loss.
All these properties are proved in \cref{lem:inverse} in
Appendix~\ref{app:inverse}.

\subsection{Theorem 1: making slow episodes compatible with convergence}
\label{sec:blocks}

For a prescribed gain $G$, we construct increasingly long gradient
plateaus with decreasing heights. Lowering the heights keeps the total
displacement finite, while lengthening the plateaus retains enough error
to violate the proposed rate $1/(n^2G(n))$ at arbitrarily late iterates.
\Cref{fig:block-construction} summarizes how these two requirements
are met within one gradient sequence.

\begin{figure}[!htb]
\centering
\includegraphics[width=\linewidth]{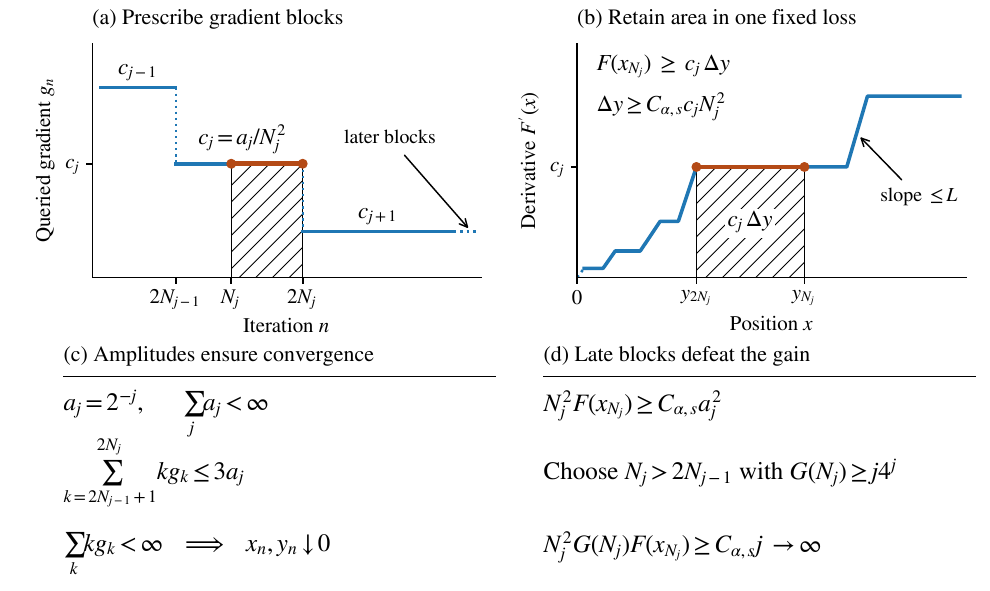}
\caption{Block construction; axes are not to scale. Orange marks the same
constant-gradient interval in time (a) and space (b), retaining area
$c_j\Delta y\le F(x_{N_j})$. Summable amplitudes ensure convergence (c);
late blocks amplify the residual (d). $C_{\alpha,s}>0$ is the block constant.}
\label{fig:block-construction}
\end{figure}

First partition the positive iteration indices into consecutive blocks.
Set $a_j=2^{-j}$ and $N_0=0$, and choose integers $N_j\ge8$ with
$N_j>2N_{j-1}$; their dependence on $G$ will be specified below.
On each block, prescribe the constant queried gradient
\begin{equation}
 g_k=c_j=\frac{a_j}{N_j^2}
 \qquad\text{for }2N_{j-1}<k\le2N_j.
 \label{eq:block-schedule}
\end{equation}
These blocks cover all positive iteration indices, and the heights $c_j$
decrease from block to block. Thus the gradient sequence is positive
and nonincreasing. Each plateau consists of equal values of
$g_k=F'(y_k)$; the loss values continue to decrease along the orbit.

To preserve convergence as the blocks grow longer, their heights decrease
in proportion to $N_j^{-2}$. A block contains $O(N_j)$ indices with
weights $k=O(N_j)$. More precisely, its weighted gradient sum satisfies
\[
 \sum_{k=2N_{j-1}+1}^{2N_j}kg_k
 \le3c_jN_j^2=3a_j.
\]
Since $\sum_j a_j<\infty$, this gives $\sum_k kg_k<\infty$.
The inverse construction therefore has finite total displacement and
$x_n,y_n\downarrow0$, as established in \cref{lem:inverse}.
Crucially, each block's contribution is controlled by $a_j$ independently
of $N_j$: we can place a block arbitrarily late without increasing
this bound.

At the same time, a long plateau retains a lower bound on the error.
The separation $N_j>2N_{j-1}$ places $[N_j,2N_j]$ entirely within
block $j$. Over this interval, momentum accumulates the repeated
gradient contributions and produces a spatial width of at least
$C_{\alpha,s}c_jN_j^2$, where $C_{\alpha,s}>0$ depends only on the
damping and step. The interpolated derivative equals $c_j$ on
$[y_{2N_j},y_{N_j}]$, so the area under this segment gives
\[
 F(x_{N_j})\ge c_j\bigl(y_{N_j}-y_{2N_j}\bigr)
 \ge C_{\alpha,s}c_j^2N_j^2
 =\frac{C_{\alpha,s}a_j^2}{N_j^2}.
\]
\Cref{lem:block} proves this estimate independently of the gradients
before the block. Hence the normalized residual satisfies
$N_j^2F(x_{N_j})\ge C_{\alpha,s}a_j^2$ at the selected indices.
This is a subsequence lower bound; its vanishing coefficient $a_j^2$
is consistent with the strict $o(n^{-2})$ guarantee.

Finally, choose the blocks late enough to defeat the proposed gain.
Since $G(n)\to\infty$,
each $N_j$ can be chosen large enough that $G(N_j)\ge j4^j$, in addition
to the separation requirements. Multiplying the retained lower bound by
this gain gives
\begin{equation}
 N_j^2G(N_j)F(x_{N_j})
 \ge C_{\alpha,s}G(N_j)a_j^2
 \ge C_{\alpha,s}j\longrightarrow\infty.
 \label{eq:block-consequences}
\end{equation}
Thus the same amplitudes that ensure convergence leave a residual lower
bound that the growing gain amplifies without bound along one subsequence.

It remains to meet the prescribed smoothness bound. Within each block,
the interpolated derivative is constant; only the joins between gradient
levels require control. These joins occur across the space between
consecutive query points. The velocity built up during the preceding
block provides enough separation to keep their slopes below
$L$ once the first scale is sufficiently large. The explicit choice is
given in \cref{alg:inverse-construction}, and the estimates are proved
in Appendix~\ref{app:no-gain-proof}. A concrete instance with these plateaus
and continuous affine joins is shown in Appendix~\ref{app:construction-instances}.

All blocks are chosen for the given $G$ and incorporated into a single
derivative, with their query points accumulating at zero. The same
fixed loss therefore realizes every lower bound in
\eqref{eq:block-consequences}.

\subsection{Theorem 2: matching the orbit to power geometry}\label{sec:power}

We now use the inverse construction of \cref{sec:inverse-framework}
to obtain a loss with local $p$-power geometry and an exact power-rate
tail. Following the gradient-design approach of \cref{sec:blocks},
we prescribe a power-law sequence whose exponent links the shape of
the loss to the decay of its orbit. \Cref{fig:power-construction}
summarizes the construction.

\begin{figure}[!htb]
\centering
\includegraphics[width=\linewidth]{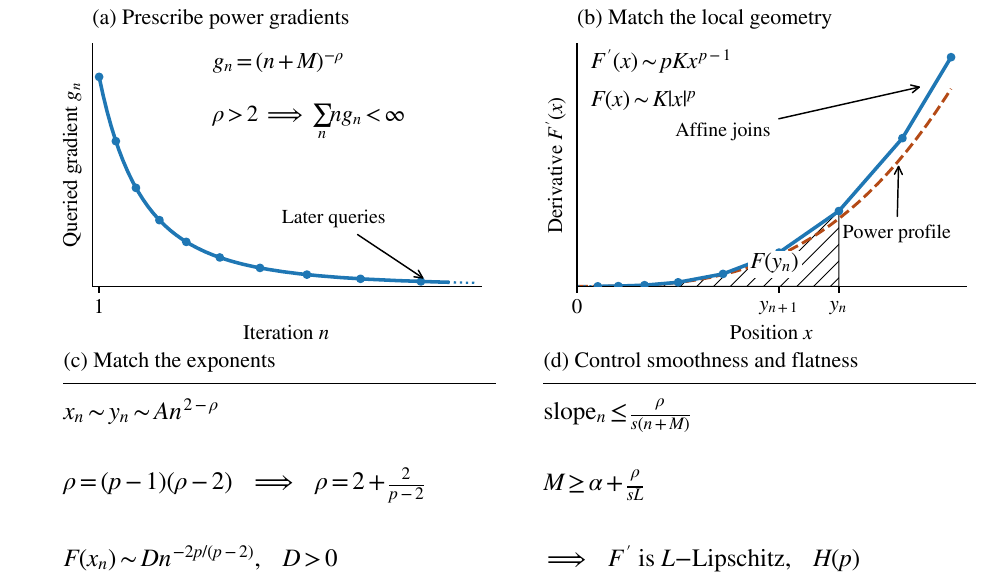}
\caption{Power construction; axes are not to scale. Blue shows the gradients
(a) and affine derivative (b); orange marks the power profile, and hatching
represents $F(y_n)$. Exponent matching yields the rate (c); the shift
ensures smoothness and flatness (d). Here $p>2$ and $A,K,D>0$.}
\label{fig:power-construction}
\end{figure}

For the prescribed $p>2$, choose
\begin{equation}
 g_n=(n+M)^{-\rho},\qquad
 \rho=2+\frac2{p-2},\qquad M>0.
 \label{eq:power-schedule}
\end{equation}
The inverse construction yields
\[
 x_n\sim y_n\sim An^{-2/(p-2)},\qquad A>0.
\]
With this choice of $\rho$, the gradients $g_n=F'(y_n)$ are
asymptotically proportional to $y_n^{p-1}$, the derivative behavior
of a $p$-power loss. This is the link between the temporal gradient
sequence and the spatial geometry we want to construct.

Interpolating the derivative and integrating it as in
\cref{sec:inverse-framework} gives
\[
 F(x)\sim K|x|^p\quad(x\to0),\qquad
 F(x_n)\sim Dn^{-2p/(p-2)}\quad(n\to\infty),\qquad K,D>0.
\]
Thus the same fixed loss has local $p$-power growth and attains the
upper rate in \cref{thm:power} with a positive asymptotic constant.
This establishes sharpness at the stated exponent.

Choosing the fixed shift $M$ sufficiently large ensures global
smoothness and the required flatness without changing the asymptotic
power. Appendix~\ref{app:power} gives the parameter checks, interpolation
estimates, and extension to any fixed finite dimension, together with
a concrete $p=4$ illustration. \Cref{alg:inverse-construction}
records the complete choices for both constructions.

\begin{algorithm}[!htbp]
\caption{Complete fixed-loss construction for Theorems 1 and 2}
\label{alg:inverse-construction}
\begin{algorithmic}[1]
\Require Parameters of Theorem 1 or 2, with $\alpha>3$, $L>0$,
$0<s<L^{-1}$, dimension $d\ge1$, and initial distance $R>0$ for Theorem 1.
\State For Theorem 1, set $N_0=0$ and choose integers $N_j>2N_{j-1}$ with
\Statex \hspace{\algorithmicindent}
$N_j\ge\max\{8,3(\alpha+1)e^\alpha/(sL)\}$ and $G(N_j)\ge j4^j$.
\Statex \hspace{\algorithmicindent}
Set $g_k=2^{-j}/N_j^2$ for $2N_{j-1}<k\le2N_j$.
\State For Theorem 2, set $\rho=2+2/(p-2)$, choose $M\ge\alpha+\rho/(sL)$,
and set $g_n=(n+M)^{-\rho}$.
\State Set $v_1=0$ and $v_{n+1}=\beta_nv_n+sg_n$ for every $n\ge1$.
\State Set $x_n=\sum_{j=n+1}^{\infty}v_j$, $x_0=x_1$, and
$y_n=x_n-\beta_nv_n$.
\State Set $\psi(0)=0$, $\psi(y_n)=g_n$; interpolate affinely between
consecutive nodes.
\State Set $\psi(x)=g_1$ for $x\ge y_1$ and
$F(x)=\int_0^{|x|}\psi(u)\,du$.
\State For Theorem 1, set $h(t)=Lt^2/2$ and $\delta=R/x_1$;
replace $(F,x_n,y_n)$ by
\Statex \hspace{\algorithmicindent}
$(\delta^2F(\,\cdot\,/\delta),\delta x_n,\delta y_n)$.
\State For Theorem 2, set $h(0)=0$ and
$h'(t)=\frac{L}{p-1}\operatorname{sgn}(t)\min\{|t|^{p-1},1\}$.
\State Define $\widehat F(z)=F(z_1)+\sum_{i=2}^dh(z_i)$.
\State \Return $\widehat F$ and its exact CD orbit
 $\widehat x_n=(x_n,0,\ldots,0)$, $\widehat y_n=(y_n,0,\ldots,0)$.
\end{algorithmic}
\end{algorithm}

\section{Numerical experiments on the fixed losses}\label{sec:experiments}

We run CD on the fixed losses with $L=1$, $s=1/2$, and $x_0=x_1=1$.
\Cref{fig:main-experiments} summarizes the results;
Appendix~\ref{app:numerical-witnesses} gives the protocols and numerical checks.

\begin{figure}[!htbp]
\centering
\includegraphics[width=\linewidth]{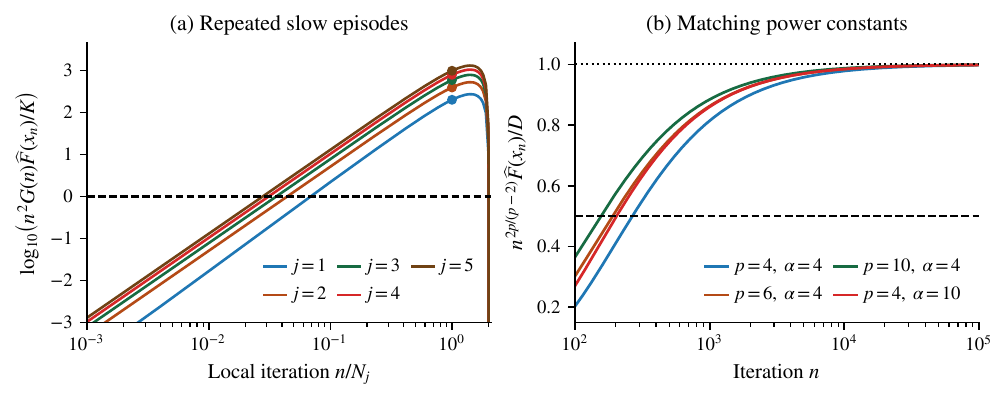}
\caption{CD on the fixed witnesses. Left: five windows of one orbit;
crossing zero means exceeding $K/[n^2G(n)]$, and dots mark $N_j$.
Right: power witnesses approach $1$ (dotted), exceeding $1/2$ (dashed).
$\widehat F$ omits a bounded integration tail; $K$ and $D$ are fixed by construction.}
\label{fig:main-experiments}
\end{figure}

\subsection{Repeated exceedances for a double-logarithmic gain}

For \cref{thm:no-gain}, take $\alpha=4$ and
$G(n)=\log\log(e^e+n)$. With $a_j=(1-r)r^{j-1}$, $r=0.999$,
the scales in Appendix~\ref{app:numerical-witnesses} give
$N_j^2G(N_j)F(x_{N_j})/K\ge2j$, with a fixed $K>0$.
The first five computed ratios are approximately $200$, $394$, $590$,
$786$, and $983$. Local time $n/N_j$ makes each crossing visible.
Because $N_5=2^{34437}$, constant-gradient intervals are traversed by
the closed-form discrete recurrence, with individual oracle steps near
joins. Direct-step and higher-precision checks validate this batching.
The five displayed episodes illustrate the construction; infinitely many
exceedances follow from the bound $2j$.

\subsection{Attaining the power-rate constants}

For \cref{thm:power}, take $g_n=(n+32)^{-\rho}$,
$\rho=2+2/(p-2)$, with $(p,\alpha)=(4,4),(6,4),(10,4),(4,10)$.
Each run executes $10^5$ iterations. Using the analytic constant $D$
without fitting, the final ratios $n^{2p/(p-2)}\widehat F(x_n)/D$
lie between $0.9978$ and $0.9987$. All four observed tails stay above
$1/2$, consistent with the predicted positive asymptotic constant.

\FloatBarrier

\section{Conclusion and limitation}\label{sec:conclusion}

We prove that the strict $o(n^{-2})$ guarantee for CD has no universal
remainder rate: every divergent gain is defeated along a subsequence by
one fixed smooth convex loss. We also construct fixed losses with
$F(x_n)-F^*\sim Dn^{-2p/(p-2)}$, $D>0$, attaining the established upper
rates under the stated geometry and damping assumptions. Both results
follow from prescribing queried gradients and recovering a loss whose
exact discrete orbit realizes the design.

Our analysis concerns deterministic full-gradient CD on convex
objectives. The no-gain loss depends on $G$ and has infinitely many
curvature transitions near its minimizer; it does not imply that every
loss is slow. Stochastic gradients, adaptive restarts, and nonconvex
objectives require separate analysis.

Two questions remain unresolved. First, what is the weakest damping
that guarantees the exponent $2p/(p-2)$ under local $p$-power growth
alone? Our examples establish sharpness of the exponent in the stated
regimes, but do not establish necessity of the damping threshold.
Second, how can these local asymptotic rates be converted into sharp
finite-time bounds with explicit constants and onset times? This requires
quantifying how local geometry controls the rate and what global
information controls entry into the neighborhood where it applies.

\label{maintext:end}
\section*{AI Use Statement}
The initial research, manuscript draft, and Lean~4 formalization were
produced autonomously by an in-house research agent
system powered by GPT-5.6 Sol and GPT-6 Astra. The human authors defined the research
objectives. They subsequently used GPT-6 Astra to assist in verifying
and correcting the results, refining the arguments, and preparing the
final manuscript.

\bibliography{references}
\bibliographystyle{iclr2027_conference}
\clearpage
\appendix
\raggedbottom
\section{Proof roadmap and standing notation}\label{app:overview}

The proofs have two construction branches and one upper-bound branch.
Both constructions first realize prescribed gradients as an exact CD
orbit of a single smooth convex loss. Constant-gradient blocks then
give the arbitrary-gain obstruction, while a shifted power sequence
gives the sharp geometry-dependent examples. The upper bounds use
established energy estimates, with a localization argument that removes
coercivity in finite dimension. \Cref{fig:proof-dependencies} records
these dependencies. The notation and proof targets are restated below
so that the appendix can be read independently of the main exposition.

\begin{figure}[H]
\centering
\begingroup
\resizebox{\linewidth}{!}{%
\begin{tikzpicture}[
  box/.style={draw=black, line width=.8pt, fill=white, text=black,
    text width=4.15cm, minimum height=.95cm, align=center,
    inner sep=5pt, font=\small},
  arrow/.style={draw=black,-{Latex[length=2mm]},line width=.8pt},
  goal/.style={box,line width=1.1pt}
]
\node[box,text width=7.2cm] (inverse) at (2.5,0)
  {Lemma~\ref{lem:inverse}: inverse orbit and smooth interpolation};
\node[box] (cd) at (10,0)
  {CD convergence\\\citep{Chambolle2015}};

\node[box] (block) at (0,-1.65)
  {Lemma~\ref{lem:block}\\constant-gradient block bound};
\node[box] (power) at (5,-1.65)
  {Lemma~\ref{lem:power-leading}\\exact power asymptotics};
\node[box] (local) at (10,-1.65)
  {Lemma~\ref{lem:localization}\\eventual local geometry};

\node[goal] (nogain) at (0,-3.3)
  {Theorem~\ref{thm:no-gain}\\choose scales, rescale, embed\\Appendix~\ref{app:no-gain-proof}};
\node[box] (flat) at (5,-3.3)
  {Lemma~\ref{lem:power-flatness}\\endpoint flatness $H(p)$};
\node[box] (upper) at (10,-3.3)
  {Geometry upper bounds\\cited energies + localization\\Appendix~\ref{app:upper-transfer}};

\node[box] (embed) at (5,-4.95)
  {Lemma~\ref{lem:power-embedding}\\fixed-dimensional witnesses};
\node[goal,text width=7.2cm] (sharp) at (7.5,-6.6)
  {Theorem~\ref{thm:power}: matching upper and lower rates\\
   $F(x_n)-F^*\sim Dn^{-2p/(p-2)}$, $D>0$, on the witness};

\draw[arrow] (inverse.south) -- ++(0,-.3) -| (block.north);
\draw[arrow] (inverse.south) -- ++(0,-.3) -| (power.north);
\draw[arrow] (block) -- (nogain);
\draw[arrow] (power) -- (flat);
\draw[arrow] (flat) -- (embed);
\draw[arrow] (cd) -- (local);
\draw[arrow] (local) -- (upper);
\draw[arrow] (embed.south) -- (5,-5.75) -| ([xshift=-1.2cm]sharp.north);
\draw[arrow] (upper.south) -- (10,-5.75) -| ([xshift=1.2cm]sharp.north);
\end{tikzpicture}%
}
\endgroup
\caption{Proof dependencies. An arrow indicates an input to the next
argument. Numbered lemmas are proved in this appendix. CD convergence
is taken from \citet{Chambolle2015}; the upper-rate arguments use
\citet{Apidopoulos2021} and \citet{aujol2024strongconvergencefistaiterates}. The block and power constructions are
independent of those upper-rate arguments.}
\label{fig:proof-dependencies}
\end{figure}

\subsection{What each part of the proof must accomplish}

Appendix~\ref{app:inverse} first recovers positions from the full velocity tail.
The main difficulty is to make all infinitely many queries compatible
with a globally Lipschitz derivative at their accumulation point. The
general interpolation bound is $1/s$, which is larger than the prescribed
$L$ when $s<L^{-1}$. The block construction therefore needs a separate
curvature estimate. Its scale choices balance weighted summability
against the lower bound at infinitely many selected indices.

Appendix~\ref{app:power} replaces blocks by a shifted power sequence. Leading
asymptotics determine both the residual exponent and local growth, but
do not settle the endpoint inequality $pF(x)\le xF'(x)$. The decisive
step is to retain the first correction, choose its sign through the
shift, and show that affine interpolation introduces a smaller error.
A separable extension then preserves this geometry in every fixed
finite dimension.

Appendix~\ref{app:upper-transfer} proves the upper statements. It identifies the
published iteration convention and explains exactly where local
geometry becomes valid. For growth alone, the technical difficulty is
that the Lyapunov energy has a negative distance term; the growth
inequality controls that term and closes the residual estimate.
Appendix~\ref{app:quadratic} treats the quadratic boundary separately, and
Appendix~\ref{app:supplement} introduces the supplementary materials and
records the scope of the Lean~4 formalization.

\subsection{Setting, recurrence, and local geometry}\label{app:setting}

Let $F:\R^d\to\R$ be convex, attain its minimum $F^*$, and have a
globally $L$-Lipschitz gradient, where $d\ge1$ is fixed and $L>0$.
Norms and inner products are Euclidean. We write $C^{1,1}$ for a
differentiable loss with globally Lipschitz gradient, and call $F$
coercive when $F(x)\to\infty$ as $\|x\|\to\infty$.
For $\alpha>3$ and $0<s<L^{-1}$, the Chambolle--Dossal (CD) recurrence is
\begin{equation}
 \beta_n=\frac{n-1}{n+\alpha-1},\qquad
 y_n=x_n+\beta_n(x_n-x_{n-1}),\qquad
 x_{n+1}=y_n-s\nabla F(y_n),\quad n\ge1.
 \label{eq:app-cd}
\end{equation}
The queried gradients are $g_n=\nabla F(y_n)$; the residual and its
normalization are $r_n=F(x_n)-F^*$ and $q_n=n^2r_n$.
Since $\beta_1=0$, $x_0$ does not affect later iterates. We use
$x_0=x_1$ and normalize our constructed losses so that $F^*=F(0)=0$.

For a unique minimizer $x^*$, the local conditions used here are
\begin{align}
 H(\beta):\quad F(x)-F^*&\le
 \frac1\beta\langle\nabla F(x),x-x^*\rangle
 &&(\|x-x^*\|\le\eta),\label{eq:app-H}\\
 L(p):\quad F(x)-F^*&\ge K\|x-x^*\|^p
 &&(\|x-x^*\|\le\varepsilon),\label{eq:app-Lp}
\end{align}
for some $\eta,\varepsilon,K>0$. The flatness exponent $\beta>2$ is
distinct from the momentum coefficient $\beta_n$. If the minimizer
set $X^*$ need not be a singleton, we use
\begin{equation}
 L_{\mathrm{set}}(p):\quad F(x)-F^*\ge K\dist(x,X^*)^p
 \quad\text{whenever }\dist(x,X^*)\le\varepsilon.
 \label{eq:app-Lset}
\end{equation}
The same constants apply throughout the indicated neighborhood.
Here $p>2$, and $\dist(x,X^*)$ is the Euclidean distance to the
minimizer set. For a unique minimizer, $L_{\mathrm{set}}(p)$ is $L(p)$.
The symbols $O$, $o$, and $\sim$ refer to the stated limit; in
particular, $a_n\sim b_n$ means $a_n/b_n\to1$.

\subsection{The conclusions to be proved}\label{app:targets}

For \cref{thm:no-gain}, fix any positive nondecreasing gain
$G(n)\to\infty$ and initial distance $R>0$, in addition to
$\alpha,L,s$. We construct one even, coercive scalar convex loss with
$L$-Lipschitz derivative, unique minimizer zero, and an exact CD orbit
from $x_0=x_1=R$ such that
\[
 \sup_{n\ge1}n^2G(n)F(x_n)=\infty.
\]
The same residual sequence must be realizable in every fixed finite
dimension. This unboundedness conclusion coexists with the established
$r_n=o(n^{-2})$ guarantee \citep{AP16}; the gain-weighted residual
is unbounded on a subsequence, not necessarily divergent at every index.

For \cref{thm:power}, put $q=2p/(p-2)$. We establish the upper bound
$r_n=O(n^{-q})$ for every initialization under either of these regimes:
\begin{enumerate}[label=(\roman*),leftmargin=*]
 \item unique minimizer, $H(\beta)$ and $L(p)$, with
 $2<\beta\le p$ and $\alpha\ge(\beta+2)/(\beta-2)$;
 \item $L_{\mathrm{set}}(p)$ and $\alpha>5+8/(p-2)$.
\end{enumerate}
For every admissible parameter choice and fixed finite dimension, we
also construct a coercive loss with unique minimizer and some
initialization for which $r_n\sim Dn^{-q}$, $D>0$. These examples
satisfy $H(p)$ as well as both growth formulations. Their positive
asymptotic constant rules out any common divergent multiplicative
improvement of the upper rate.

\section{Inverse construction and the no-gain theorem}\label{app:inverse}

The task is to build one loss that defeats a prescribed gain at infinitely
many indices. We first give a general procedure for realizing a positive
gradient sequence as an exact orbit. A block estimate then quantifies how
much objective error a constant-gradient interval retains. Finally, we
choose all the blocks at once, verifying that their accumulation at zero
preserves the required smoothness.

Prescribing an orbit and recovering a scalar objective has a continuous
precedent in \citet[Lemma~2.7]{siegel2023qualitativedifferencegradientflows}. Interpolation of prescribed
first-order data is also a standard tool in smooth convex optimization
\citep{Taylor2017}. Here the query locations are determined by
the discrete momentum recurrence, and the interpolation must work at
countably many nodes accumulating at the minimizer. The next lemma
supplies this compatibility directly.

\subsection{From a gradient sequence to one smooth convex loss}

The velocity $v_n=x_{n-1}-x_n$ measures motion toward zero. Its recurrence
can be solved forward from the gradients, but the positions are defined
by summing the entire remaining velocity tail. This is what fixes the
limiting point at zero without knowing the loss in advance.

\begin{lemma}[Inverse orbit and smooth interpolation]\label{lem:inverse}
Let $\alpha>3$ and $s>0$. Let $(g_n)_{n\ge1}$ be positive,
nonincreasing, and satisfy $\sum_{n\ge1}ng_n<\infty$. With
$\beta_n=(n-1)/(n+\alpha-1)$, define
\begin{equation}
 \begin{aligned}
 v_1&=0,& v_{n+1}&=\beta_n v_n+sg_n,\\
 x_n&=\sum_{j=n+1}^{\infty}v_j,& y_n&=x_n-\beta_n v_n,
 \end{aligned}
 \qquad n\ge1,\qquad x_0=x_1.
 \label{eq:app-inverse}
\end{equation}
These sums are finite. In particular, $\sum_n v_n<\infty$,
$x_n\downarrow0$, $y_n\downarrow0$, and $y_n-sg_n=x_{n+1}$.
There is a convex $C^{1,1}$ function $F:\R\to\R$ with
$F'(y_n)=g_n$ for all $n$, so these points are an exact CD orbit.
The function may be chosen even, coercive, and uniquely minimized at zero.
Its derivative is $1/s$-Lipschitz; if all interpolation slopes are at most
$L$, it is $L$-Lipschitz.
\end{lemma}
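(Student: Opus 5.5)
Unrolling the velocity recurrence gives $v_{n+1}=s\sum_{k=1}^{n}g_k\prod_{m=k+1}^{n}\beta_m$. Since $\beta_m=(m-1)/(m+\alpha-1)$, the product satisfies
\[
 \prod_{m=k+1}^{n}\beta_m\le C_\alpha\Bigl(\frac{k}{n}\Bigr)^{\alpha}
\]
(up to shifts of $k$). Summing over $n\ge k$ then gives $\sum_{n\ge k}\prod_{m=k+1}^n\beta_m\le C'_\alpha k$, where $\alpha>1$ makes the tail $\sum_n n^{-\alpha}$ converge. By Tonelli,
\[
 \sum_n v_{n+1}\le sC'_\alpha\sum_k kg_k<\infty .
\]
So $x_n=\sum_{j>n}v_j$ is finite, positive (every $v_{j+1}\ge sg_j>0$), strictly decreasing, and tends to $0$. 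The identity $x_{n+1}=x_n-v_{n+1}=x_n-\beta_nv_n-sg_n=y_n-sg_n$ is immediate. For the query points, $y_n=x_{n+1}+sg_n>0$. Their monotonicity follows from
\[
 y_n-y_{n+1}=(x_{n+1}-x_{n+2})+s(g_n-g_{n+1})=v_{n+2}+s(g_n-g_{n+1})>0,
\]
and $y_n\to0$ because $g_n\to0$. This also rules out coincident nodes.

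Next, define $\psi$ and $F$ as in \eqref{eq:derivative-interpolant}--\eqref{eq:integrated-loss}. On $[y_{n+1},y_n]$ the function $\psi$ is affine with slope $(g_n-g_{n+1})/(y_n-y_{n+1})\ge0$. It is constant for $x\ge y_1$, and it is continuous at $0$ because $g_n\to0$ and $y_n\to0$. So $\psi$ is continuous, nondecreasing, and positive on $(0,\infty)$. Then $F$ is $C^1$ with $F'(x)=\operatorname{sgn}(x)\psi(|x|)$. Since $\psi(0)=0$, the derivative is continuous at $0$ and is nondecreasing on $\R$, so $F$ is convex. Oddness of $F'$ and positivity of $\psi$ off zero make $F$ even with unique minimizer $0$ and $F(0)=0$. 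Coercivity follows from $F(x)\ge g_1(|x|-y_1)$ for $|x|\ge y_1$. By construction $F'(y_n)=\psi(y_n)=g_n$, and with $x_0=x_1$ the recurrence is exactly CD, so the points form an exact orbit.

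The main step is the Lipschitz bound. A continuous piecewise affine function on countably many pieces accumulating only at $0$, extended by oddness, has global Lipschitz constant equal to the supremum of its slopes. The pieces are the segments, the flat part beyond $y_1$, and the mirrored copies; a continuity and triangle-inequality argument across the accumulation point $0$ handles pairs of points straddling infinitely many nodes. The conditional claim ``all slopes $\le L$ implies $L$-Lipschitz'' is therefore immediate. For the unconditional bound $1/s$, use the gap identity above:
\[
 y_n-y_{n+1}\ge s(g_n-g_{n+1}),
\]
which gives every slope at most $1/s$. The delicate points I expect are checking that $y_n$ really decreases strictly, which the displayed identity settles, and justifying the slope-supremum argument rigorously at the accumulation point $0$.
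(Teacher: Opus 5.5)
Your proposal is correct and follows the paper's proof step for step. The paper likewise unrolls the velocity recurrence and applies Tonelli for summability, and uses the gap identity $y_n-y_{n+1}=v_{n+2}+s(g_n-g_{n+1})$ both for strict ordering of the nodes and for the $1/s$ slope bound. Its Lipschitz argument is the one you sketch: telescope over the finitely many nodes in $[u,v]$, let $u\downarrow0$, and handle straddling pairs via $|\psi(v)-\psi(u)|=\psi(v)+\psi(-u)$.

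The only difference is in the summability step. The paper evaluates $\sum_{n\ge k}\prod_{i=k+1}^n\beta_i=(k+\alpha-1)/(\alpha-1)$ exactly via gamma and beta integrals, and it reuses these identities later in the block and power asymptotics. Your cruder power bound on the product is enough for this lemma.
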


\begin{proof}[Proof of \cref{lem:inverse}]
\proofstep{1}{Summability of the recovered velocities}
For integers $1\le k\le n$, with the empty-product convention
$P_{k,k}=1$, the products of momentum coefficients satisfy
\begin{equation}
 P_{k,n}:=\prod_{i=k+1}^{n}\beta_i
 =\frac{\Gamma(n)\Gamma(k+\alpha)}{\Gamma(k)\Gamma(n+\alpha)},
 \qquad
 \sum_{n=k}^{\infty}P_{k,n}=\frac{k+\alpha-1}{\alpha-1}.
 \label{eq:products}
\end{equation}
Here $\Gamma$ is the gamma function. Its recurrence
$\Gamma(z+1)=z\Gamma(z)$ for $z>0$ gives
\[
 \prod_{i=k+1}^{n}\beta_i
 =\frac{k(k+1)\cdots(n-1)}{(k+\alpha)(k+\alpha+1)\cdots(n+\alpha-1)}
 =\frac{\Gamma(n)/\Gamma(k)}{\Gamma(n+\alpha)/\Gamma(k+\alpha)}.
\]
To compute the infinite sum, use the classical beta integral
\citep[Eq.~5.12.1]{NIST:DLMF}:
\[
 \frac{\Gamma(n)}{\Gamma(n+\alpha)}
 =\frac1{\Gamma(\alpha)}
   \int_0^1 t^{n-1}(1-t)^{\alpha-1}\,dt.
\]
All integrands are nonnegative, so Tonelli's theorem permits exchanging
the sum and the integral \citep[Theorem~2.37]{Folland99}. The geometric
series $\sum_{n=k}^{\infty}t^{n-1}=t^{k-1}/(1-t)$ for $0<t<1$ then yields
\begin{align*}
 \sum_{n=k}^{\infty}P_{k,n}
 &=\frac{\Gamma(k+\alpha)}{\Gamma(k)\Gamma(\alpha)}
   \int_0^1\left(\sum_{n=k}^{\infty}t^{n-1}\right)(1-t)^{\alpha-1}\,dt\\
 &=\frac{\Gamma(k+\alpha)}{\Gamma(k)\Gamma(\alpha)}
   \int_0^1 t^{k-1}(1-t)^{\alpha-2}\,dt\\
 &=\frac{\Gamma(k+\alpha)}{\Gamma(k)\Gamma(\alpha)}
   \frac{\Gamma(k)\Gamma(\alpha-1)}{\Gamma(k+\alpha-1)}
 =\frac{k+\alpha-1}{\alpha-1}.
\end{align*}
The second beta integral is finite because $k\ge1$ and $\alpha>1$,
as ensured by our assumption $\alpha>3$. The last equality uses
$\Gamma(k+\alpha)=(k+\alpha-1)\Gamma(k+\alpha-1)$ and
$\Gamma(\alpha)=(\alpha-1)\Gamma(\alpha-1)$.
We next expand the velocity recurrence. The base case is
$v_2=sg_1=sg_1P_{1,1}$. If the expansion holds at index $n$, then
\[
 v_{n+2}
 =\beta_{n+1}s\sum_{k=1}^n g_kP_{k,n}+sg_{n+1}
 =s\sum_{k=1}^{n+1}g_kP_{k,n+1},
\]
because $\beta_{n+1}P_{k,n}=P_{k,n+1}$ and
$P_{n+1,n+1}=1$. Thus induction and the product sum give
\begin{equation}
 v_{n+1}=s\sum_{k=1}^n g_kP_{k,n},\qquad
 \sum_{n\ge1}v_{n+1}=\frac{s}{\alpha-1}
       \sum_{k\ge1}(k+\alpha-1)g_k<\infty.
 \label{eq:vproduct}
\end{equation}
For completeness, the sum exchange and the finiteness assertion in
\eqref{eq:vproduct} are
\begin{align*}
 \sum_{n=1}^{\infty}v_{n+1}
 &=s\sum_{n=1}^{\infty}\sum_{k=1}^n g_kP_{k,n}
 =s\sum_{k=1}^{\infty}g_k\sum_{n=k}^{\infty}P_{k,n}\\
 &=\frac{s}{\alpha-1}\sum_{k=1}^{\infty}(k+\alpha-1)g_k
 \le\frac{s\alpha}{\alpha-1}\sum_{k=1}^{\infty}kg_k<\infty.
\end{align*}
Tonelli applies because every summand is nonnegative; the inequality uses
$k+\alpha-1\le\alpha k$ for $k\ge1$ and $\alpha>1$.
Also $0\le\beta_n<1$ and $g_n>0$ imply $v_{n+1}\ge sg_n>0$.
Consequently, the tails defining $x_n$ are finite,
$x_n-x_{n+1}=v_{n+1}>0$, and $x_n\downarrow0$.

To express these tails directly in terms of the gradients, the same
beta-integral calculation, now starting at $m=n$, gives for $1\le k\le n$
\begin{align*}
 \sum_{m=n}^{\infty}P_{k,m}
 &=\frac{\Gamma(k+\alpha)}{\Gamma(k)\Gamma(\alpha)}
   \int_0^1 t^{n-1}(1-t)^{\alpha-2}\,dt\\
 &=\frac{\Gamma(k+\alpha)\Gamma(n)}
 {\Gamma(k)(\alpha-1)\Gamma(n+\alpha-1)}.
\end{align*}
Substituting the velocity expansion into $x_n$ and again exchanging
nonnegative sums yields
\begin{align*}
 x_n&=\sum_{m=n}^{\infty}v_{m+1}
 =s\sum_{k=1}^{\infty}g_k\sum_{m=\max\{n,k\}}^{\infty}P_{k,m}\\
 &=s\sum_{k=1}^n g_k\sum_{m=n}^{\infty}P_{k,m}
   +s\sum_{k=n+1}^{\infty}g_k\sum_{m=k}^{\infty}P_{k,m}.
\end{align*}
Use the tail identity in the first term and \eqref{eq:products} in the
second to obtain
\begin{align}
 x_n=\frac{s}{\alpha-1}\left[
 \frac{\Gamma(n)}{\Gamma(n+\alpha-1)}
 \sum_{k=1}^{n}\frac{\Gamma(k+\alpha)}{\Gamma(k)}g_k
 +\sum_{k=n+1}^{\infty}(k+\alpha-1)g_k\right].
 \label{eq:position-transform}
\end{align}
\proofstep{2}{Ordering the query points}
The tail differences give $x_{n-1}-x_n=v_n$ for $n\ge2$;
the same identity holds at $n=1$ because $x_0=x_1$ and $v_1=0$.
Using $\beta_nv_n=v_{n+1}-sg_n$, we therefore have
\[
 y_n=x_n-\beta_nv_n
     =x_n-v_{n+1}+sg_n=x_{n+1}+sg_n>0.
\]
Subtracting this identity at consecutive indices gives
\begin{equation}
 y_n-y_{n+1}=v_{n+2}+s(g_n-g_{n+1})>0.
 \label{eq:mesh}
\end{equation}
The right side is strictly positive because $v_{n+2}>0$ and
$g_n\ge g_{n+1}$. Moreover, $\sum_n ng_n<\infty$ implies
$ng_n\to0$, hence $g_n\to0$. Together with $x_{n+1}\to0$, the
identity $y_n=x_{n+1}+sg_n$ proves $y_n\downarrow0$.

\proofstep{3}{Interpolating through the accumulation point}
Set $\psi(0)=0$ and $\psi(y_n)=g_n$, interpolate affinely on each
$[y_{n+1},y_n]$, and set $\psi(x)=g_1$ for $x\ge y_1$.
Explicitly, on an interpolation interval,
\[
 \psi(x)=g_{n+1}
  +\frac{g_n-g_{n+1}}{y_n-y_{n+1}}(x-y_{n+1}).
\]
The derivative so defined is continuous, positive on $(0,\infty)$,
and nondecreasing. Its affine slopes satisfy
\begin{equation}
 0\le\ell_n=
 \frac{g_n-g_{n+1}}{v_{n+2}+s(g_n-g_{n+1})}\le\frac1s.
 \label{eq:slope}
\end{equation}
Here the denominator is positive. If $g_n=g_{n+1}$ the slope is zero;
otherwise the denominator is at least $s(g_n-g_{n+1})$, giving the
upper bound. For $0\le x\le y_n$, monotonicity gives
$0\le\psi(x)\le g_n\to0$, proving continuity at zero.

Let $\Lambda=1/s$, or $\Lambda=L$ when all $\ell_n\le L$.
Given $0<u<v$, only finitely many nodes lie in $[u,v]$, because
$y_n\to0$. If $u=t_0<\cdots<t_r=v$ partitions this interval at those
nodes, the slope bounds and the constant outer extension give
\[
 0\le\psi(v)-\psi(u)
 =\sum_{i=1}^r\bigl(\psi(t_i)-\psi(t_{i-1})\bigr)
 \le\Lambda\sum_{i=1}^r(t_i-t_{i-1})=\Lambda(v-u).
\]
Letting $u\downarrow0$ also gives $\psi(v)\le\Lambda v$.
Extend $\psi$ oddly to $\R$. The same-sign Lipschitz bounds follow
by reflection; for $u<0<v$,
\[
 |\psi(v)-\psi(u)|=\psi(v)+\psi(-u)
 \le\Lambda(v-u).
\]
The extension is continuous and nondecreasing, so its integral
\[
 F(x)=\int_0^x\psi(u)\,du
     =\int_0^{|x|}\psi(u)\,du
\]
is even and convex, with $F'=\psi$ globally $\Lambda$-Lipschitz.
Thus $F\in C^{1,1}$, including at zero. Since $\psi(u)>0$ for $u>0$,
$F(x)>F(0)=0$ whenever $x\ne0$. Finally, for $|x|\ge y_1$,
\[
 F(x)=F(y_1)+g_1(|x|-y_1)\longrightarrow\infty
 \quad\text{as }|x|\to\infty,
\]
which proves coercivity. The identities $F'(y_n)=g_n$,
$y_n=x_n+\beta_n(x_n-x_{n-1})$, and
$y_n-sg_n=x_{n+1}$ verify both updates of the exact CD recurrence.
\end{proof}

The automatic bound $1/s$ does not yet meet the standing requirement
$L<1/s$. Each construction below must therefore control its own
interpolation slopes. This is the only extra smoothness check needed
once the gradient sequence has been chosen.

\subsection{Why a constant-gradient block retains error}

Consider a block $n\le k\le2n$ on which $g_k=c>0$.
Each update adds $sc$ to the velocity. The momentum products within this
block have a positive lower bound depending only on $\alpha$, so these
contributions accumulate: during the latter part of the block, the
velocity is at least a constant times $scn$. Summing over a number of
steps proportional to $n$ gives a spatial segment of length at least a
constant times $scn^2$. The interpolated derivative equals $c$ on this
segment, so its integral contributes at least $C_{\alpha,s}c^2n^2$ to
$F(x_n)$, for some $C_{\alpha,s}>0$. Earlier gradients contribute only
nonnegative terms to the velocity and can be discarded in this estimate.
The lower bound therefore depends only on the current block.

\begin{lemma}[Constant-gradient block]\label{lem:block}
If $g_k=c$ for $n\le k\le2n$ and $n\ge8$, then the objective from \cref{lem:inverse} satisfies
\[
 F(x_n)\ge \frac{s e^{-\alpha}}6c^2n^2.
\]
\end{lemma}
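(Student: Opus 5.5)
The plan is to bound $F(x_n)$ below by the area under the flat piece of the interpolated derivative, and then bound the width of that flat piece using the product formula \eqref{eq:vproduct} from \cref{lem:inverse}.

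\emph{Step 1: reduce to $y_n$.} First I compare $x_n$ with $y_n$. Since $v_{n+1}=\beta_nv_n+sg_n\ge sg_n$ and $y_n=x_{n+1}+sg_n$, we get
\[
x_n=x_{n+1}+v_{n+1}\ge y_n .
\]
As $F$ is even and increasing on $[0,\infty)$, this gives $F(x_n)\ge F(y_n)$.

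\emph{Step 2: the flat piece.} All nodes $y_k$ with $n\le k\le 2n$ carry the same value $g_k=c$. The affine interpolant $\psi$ is therefore identically $c$ on $[y_{2n},y_n]$. Since $\psi\ge0$, it follows that
\[
F(y_n)\ge\int_{y_{2n}}^{y_n}\psi=c\,(y_n-y_{2n}).
\]
By the mesh identity \eqref{eq:mesh}, with $g_k-g_{k+1}=0$ inside the block, the width is
\[
y_n-y_{2n}=\sum_{k=n}^{2n-1}v_{k+2}.
\]
It therefore remains to show $\sum_{k=n}^{2n-1}v_{k+2}\ge \tfrac{s e^{-\alpha}}{6}\,c\,n^2$.

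\emph{Step 3: velocity lower bound.} By \eqref{eq:vproduct}, and discarding the nonnegative contributions of gradients before the block, $v_{m+1}\ge sc\sum_{k=n}^{m}P_{k,m}$ for $n\le m\le 2n$. The key estimate is $P_{k,m}\ge e^{-\alpha}$ whenever $n\le k\le m\le 2n$. I would prove it as follows. Write $\beta_i=1-x_i$ with $x_i=\alpha/(i+\alpha-1)$. Use $\log(1-x)\ge -x/(1-x)$, and note that $x_i/(1-x_i)=\alpha/(i-1)$. This gives
\[
-\log P_{k,m}\le\alpha\sum_{i=k+1}^{m}\frac1{i-1}\le\frac{\alpha(m-k)}{k}\le\alpha,
\]
because $m-k\le n\le k$. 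Consequently $v_{m+1}\ge sce^{-\alpha}(m-n+1)$.

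\emph{Step 4: sum.} Summing over $m+1=k+2$ with $m$ from $n+1$ to $2n$ gives
\[
\sum_{k=n}^{2n-1} v_{k+2}\ \ge\ sce^{-\alpha}\sum_{j=2}^{n+1}j .
\]
The right side is at least $sce^{-\alpha}n^2/2$, which is comfortably above the required $n^2/6$. The hypothesis $n\ge8$ only leaves slack for index bookkeeping, for example if I shrink the summation range to stay strictly inside the block.

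The main obstacle is the uniform product bound $P_{k,m}\ge e^{-\alpha}$, specifically choosing an inequality for $\log(1-x)$ that is valid for all $\alpha>3$ and $i\ge2$. If the $-x/(1-x)$ bound causes trouble for small $i$, the fallback is the exact Gamma formula \eqref{eq:products} together with a ratio bound such as Gautschi's inequality.
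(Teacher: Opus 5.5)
Your proof is correct and follows essentially the same route as the paper. Both arguments compare $F(x_n)\ge F(y_n)$, bound this by the area under the flat piece of $\psi$ built from the mesh spacings $v_{k+2}$, and bound the products below by $e^{-\alpha}$ via $\log(1-u)\ge-u/(1-u)$, applied to the velocity expansion restricted to block terms. The only difference is bookkeeping: the paper keeps just the last $\lfloor n/2\rfloor$ indices with the crude bound $\tfrac12 sce^{-\alpha}n$. You sum the full linear bound $sce^{-\alpha}(k-n+2)$ over the whole block, which gives the sharper constant $se^{-\alpha}/2$ and shows that the hypothesis $n\ge8$ is not actually needed.
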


\begin{proof}[Proof of \cref{lem:block}]
Since $0<y_n\le x_n$ and $F$ is nondecreasing on the positive axis,
$F(x_n)\ge F(y_n)$. On $[y_{k+1},y_k]$, the derivative is at least
$g_{k+1}$. Partitioning $[0,y_n]$ into these intervals therefore gives
\[
 F(x_n)\ge F(y_n)
 \ge\sum_{k=n}^{\infty}g_{k+1}(y_k-y_{k+1})
 \ge\sum_{k=n}^{\infty}g_{k+1}v_{k+2}.
\]
The first sum follows by integrating interval by interval and taking
the limit at zero; the second uses \eqref{eq:mesh} and
$g_k-g_{k+1}\ge0$.
For $n\le j\le k+1\le2n$, apply
$\log(1-u)\ge-u/(1-u)$ for $0\le u<1$ to the product in
\eqref{eq:products}:
\[
 \log P_{j,k+1}
 =\sum_{i=j+1}^{k+1}\log\left(1-\frac{\alpha}{i+\alpha-1}\right)
 \ge-\alpha\sum_{i=j+1}^{k+1}\frac1{i-1}\ge-\alpha.
\]
The logarithmic inequality follows from
$-\log(1-u)=\int_0^u(1-t)^{-1}\,dt\le u/(1-u)$.
Here $i\ge n+1\ge9$ makes $u=\alpha/(i+\alpha-1)$ lie in $(0,1)$,
and
\[
 \sum_{i=j+1}^{k+1}\frac1{i-1}
 \le\frac{k+1-j}{j}\le\frac{2n-n}{n}=1.
\]
Thus $P_{j,k+1}\ge e^{-\alpha}$ throughout this range.
For each $\lceil3n/2\rceil\le k\le2n-1$, retain only the block
terms $n\le j\le k+1$ in the velocity expansion:
\[
 v_{k+2}=s\sum_{j=1}^{k+1}g_jP_{j,k+1}
 \ge sc\sum_{j=n}^{k+1}P_{j,k+1}
 \ge sc e^{-\alpha}(k-n+2)
 \ge\frac12sc e^{-\alpha}n.
\]
There are $2n-\lceil3n/2\rceil=\lfloor n/2\rfloor\ge n/3$
such indices when $n\ge8$, and $g_{k+1}=c$ at all of them. Hence
\[
 F(x_n)\ge\sum_{k=\lceil3n/2\rceil}^{2n-1}c v_{k+2}
 \ge\frac n3\,c\,\frac12sc e^{-\alpha}n
 =\frac{se^{-\alpha}}6c^2n^2.
\]
\end{proof}

\subsection{Choosing all scales and completing the no-gain proof}
\label{app:no-gain-proof}

The choice of scales must satisfy two competing demands. The weighted
gradient sum must remain finite so that the orbit converges to zero,
while the gain-weighted normalized residual must become arbitrarily
large along a subsequence. We first choose a summable sequence $(a_j)$
to control each block's contribution to $\sum_k kg_k$, then place each
block late enough that $G$ is large. This choice of separated scales
is analogous to the scalar
flow construction in \citet[Example~2.9]{siegel2023qualitativedifferencegradientflows}; the interpolation-slope
estimate below is what makes it compatible with the fixed discrete step.

\begin{proof}[Proof of \cref{thm:no-gain}]
\proofstep{1}{Choose one infinite gradient sequence}
Work first in dimension one. Let $a_j=2^{-j}$ and $N_0=0$.
Recursively choose integers $N_j$ so that
\begin{equation}
 N_j>2N_{j-1},\qquad N_j\ge8,\qquad G(N_j)\ge j4^j,
 \qquad
 N_1\ge\frac{3(\alpha+1)e^{\alpha}}{sL}.
 \label{eq:first-scale-slack}
\end{equation}
Such choices exist because $G(n)\to\infty$. Set
$c_j=a_j/N_j^2$ and $g_k=c_j$ for $2N_{j-1}<k\le2N_j$.
Indeed, after $N_{j-1}$ is fixed, the gain inequality holds at every
sufficiently large integer, so the separation and size conditions can
be imposed simultaneously. Since $N_j\to\infty$, the blocks cover
all positive integers without gaps. Their heights satisfy
\[
 \frac{c_{j+1}}{c_j}
 =\frac12\left(\frac{N_j}{N_{j+1}}\right)^2<\frac18,
\]
so the full gradient sequence is positive and nonincreasing. Moreover,
\[
 \sum_{k\ge1}kg_k
 \le\sum_{j\ge1}c_j\sum_{k=1}^{2N_j}k
 \le3\sum_{j\ge1}c_jN_j^2
 =3\sum_{j\ge1}a_j<\infty.
\]
Here $\sum_{k=1}^{2N_j}k=N_j(2N_j+1)\le3N_j^2$ since $N_j\ge1$,
and $\sum_{j\ge1}a_j=1$.
Thus \cref{lem:inverse} produces a single even, coercive loss with
unique minimizer zero and an exact CD orbit.

\proofstep{2}{Enforce the prescribed smoothness at every block transition}
Inside a block, consecutive gradients agree, so the interpolation slope
vanishes. At a transition $n=2N_j$, the last $N_j$ terms in
\eqref{eq:vproduct}, indexed by $N_j+1\le k\le2N_j$, lie inside
block $j$. The product estimate in \cref{lem:block} gives
\[
 v_{n+1}\ge s\sum_{k=N_j+1}^{2N_j}c_jP_{k,2N_j}
 \ge sc_jN_je^{-\alpha}.
\]
At this transition $g_n=c_j$ and $g_{n+1}=c_{j+1}$. Since
$y_n=x_{n+1}+sg_n$, the spacing between the interpolation nodes is
\begin{align*}
 y_n-y_{n+1}
 &=x_{n+1}-x_{n+2}+s(g_n-g_{n+1})\\
 &=v_{n+2}+s(g_n-g_{n+1})\\
 &=\beta_{n+1}v_{n+1}+sc_{j+1}+s(c_j-c_{j+1})\\
 &=\beta_{n+1}v_{n+1}+sc_j
 \ge sc_j\left(1+\frac{N_je^{-\alpha}}{\alpha+1}\right).
\end{align*}
The last inequality uses $\beta_{n+1}=n/(n+\alpha)\ge1/(\alpha+1)$,
equivalent to $\alpha(n-1)\ge0$. The interpolation slope on
$[y_{n+1},y_n]$ is the gradient difference divided by this spacing.
Since $0<c_j-c_{j+1}\le c_j$,
\begin{align*}
 \ell_{2N_j}
 &=\frac{g_n-g_{n+1}}{y_n-y_{n+1}}
 =\frac{c_j-c_{j+1}}{y_n-y_{n+1}}\\
 &\le\frac{1}{s(1+N_je^{-\alpha}/(\alpha+1))}
 \le\frac L3.
\end{align*}
For the final inequality, put
$t_j=N_je^{-\alpha}/(\alpha+1)\ge3/(sL)$ and use
$1/[s(1+t_j)]\le1/(st_j)\le L/3$.
The lower bound on $N_1$ therefore controls every transition, including
those arbitrarily close to zero. The loss has an $L$-Lipschitz derivative.

\proofstep{3}{Violate the gain along infinitely many iterates}
The interval $[N_j,2N_j]$ lies inside block $j$.
Applying \cref{lem:block} at $n=N_j$ and substituting
$c_j=a_j/N_j^2$ gives
\[
 N_j^2F(x_{N_j})\ge\frac{se^{-\alpha}}6a_j^2,
 \qquad
 N_j^2G(N_j)F(x_{N_j})\ge\frac{se^{-\alpha}}6j\longrightarrow\infty.
\]
Indeed, $N_j^2(c_j^2N_j^2)=a_j^2=4^{-j}$ and
$G(N_j)4^{-j}\ge j$.
The objective remains fixed as $j$ grows. Since every finite prefix has
finite residuals, this also rules out any eventual bound with an
instance-dependent constant and starting index.

\proofstep{4}{Prescribe the initial distance and ambient dimension}
The recovered initial point $x_1$ is positive and finite. For any
prescribed $R>0$, put $\delta=R/x_1$ and
$F_\delta(t)=\delta^2F(t/\delta)$. Its derivative is
$F_\delta'(t)=\delta F'(t/\delta)$, with the same Lipschitz constant.
More explicitly,
\[
 |F_\delta'(u)-F_\delta'(v)|
 \le\delta L|u/\delta-v/\delta|=L|u-v|,
 \qquad
 \delta y_n-sF_\delta'(\delta y_n)
 =\delta(y_n-sF'(y_n))=\delta x_{n+1}.
\]
The extrapolation identity also scales by $\delta$, so the scaled
orbit starts at $\delta x_0=\delta x_1=R$ and follows the same CD
recurrence. Since $F_\delta(\delta x_n)=\delta^2F(x_n)$, its
gain-weighted residual remains unbounded.

For any prescribed $d>1$, set
\[
 \widetilde F(z)=F_\delta(z_1)+\frac L2\sum_{i=2}^d z_i^2
 \quad\text{and initialize at }(R,0,\ldots,0).
\]
This loss is coercive, has unique minimizer zero, and has an
$L$-Lipschitz gradient in the Euclidean norm: for $z,w\in\R^d$,
\[
 \|\nabla\widetilde F(z)-\nabla\widetilde F(w)\|^2
 =|F_\delta'(z_1)-F_\delta'(w_1)|^2
   +L^2\sum_{i=2}^d|z_i-w_i|^2
 \le L^2\|z-w\|^2.
\]
To check coercivity, a sublevel $\widetilde F(z)\le H$ with $H\ge0$
forces $F_\delta(z_1)\le H$ and
$\sum_{i=2}^d z_i^2\le2H/L$. The scalar coercivity bounds $z_1$,
so every such sublevel is bounded. Continuity then gives coercivity
in finite dimension. Each summand vanishes only at zero, proving
uniqueness of the minimizer.
If two successive transverse coordinates equal zero, their extrapolated
coordinate and gradient also equal zero; induction therefore keeps
them zero. The first coordinate follows the scaled scalar orbit, and
$\widetilde F(\delta x_n,0,\ldots,0)=\delta^2F(x_n)$.
Both the initial distance and the scalar residual sequence are
preserved. This completes the theorem.
\end{proof}

\section{Exact power witnesses and endpoint flatness}\label{app:power}

We next build the sharp instances for the geometry-dependent rates.
Unlike the block construction, a power gradient sequence has a regular
tail, which makes its velocity, position, and loss asymptotics explicit.
The main additional work is at the endpoint $H(p)$: a limit of
$xF'(x)/F(x)$ equal to $p$ does not say on which side of $p$ the ratio
lies. We will choose a positive first correction and show that it survives
interpolation between the query points.

\subsection{Parameter choice and the required exponent}

For the prescribed
$p>2$, set
\[
 \rho=2+\frac2{p-2},\qquad \delta=\rho-2,\qquad
 g_n=(n+M)^{-\rho}.
\]
Choose a fixed positive shift $M$, subject to the bounds below.
In part~(i), the map $t\mapsto1+4/(t-2)$ decreases for $t>2$, so
\[
 \alpha\ge\frac{\beta+2}{\beta-2}
 =1+\frac4{\beta-2}\ge1+\frac4{p-2}=2\rho-3.
\]
Thus $\rho\le(\alpha+3)/2<\alpha$, where the strict inequality uses
the standing assumption $\alpha>3$. In part~(ii),
\[
 \alpha>5+\frac8{p-2}=4\rho-3>\rho,
\]
because $\rho>2$. Consequently, both regimes give
\begin{equation}
 2<\rho<\alpha,\qquad a:=\alpha-\rho>0.
 \label{eq:power-parameter-range}
\end{equation}
The sequence $(g_n)$ is positive and nonincreasing, and
$\sum_n ng_n\le\sum_n n^{1-\rho}<\infty$, since $M>0$ and
$\rho>2$. We use the exact inverse orbit from
\cref{lem:inverse}, with $x_0=x_1$.

\subsection{Leading asymptotics, local growth, and global smoothness}

The first lemma identifies the exponent and the positive asymptotic
constant. It also checks local growth throughout a neighborhood, rather
than only at the query points, and enforces the prescribed smoothness.

\begin{lemma}[Exact power asymptotics]\label{lem:power-leading}
Fix $\alpha>3$, $L>0$, $0<s<L^{-1}$, $2<\rho<\alpha$, and
$M\ge\rho/(sL)$. For $g_n=(n+M)^{-\rho}$, construct $F$ using the
inverse orbit and the piecewise-affine derivative in the proof of
\cref{lem:inverse}, with $F(0)=0$. Set
\[
 \delta=\rho-2,\quad p=\frac{2(\rho-1)}{\rho-2},\quad
 B=\frac{s}{\alpha+1-\rho},\quad A=\frac B\delta,\quad
 D=\frac{B}{2(\rho-1)}.
\]
The resulting loss is even, coercive, convex, and has an $L$-Lipschitz
derivative and unique minimizer zero. Its exact orbit satisfies
\[
 v_{n+1}\sim Bn^{1-\rho},\qquad
 x_n\sim y_n\sim An^{-\delta},\qquad
 F(x_n)\sim Dn^{-2p/(p-2)}.
\]
Moreover, $F(x)\sim(D/A^p)|x|^p$ as $x\to0$, so $L(p)$ holds.
\end{lemma}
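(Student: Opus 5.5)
Lemma~\ref{lem:inverse} already provides evenness, coercivity, convexity, the unique minimizer zero, the exact CD orbit, and the $1/s$ smoothness bound. The plan is therefore to (a) derive the orbit asymptotics from the explicit formulas \eqref{eq:vproduct} and \eqref{eq:position-transform}, (b) bound the interpolation slopes by $L$ using $M\ge\rho/(sL)$, and (c) transfer the node asymptotics to all $x$ and to $F$.

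\emph{Velocity and position asymptotics.} Write $P_{k,n}=\frac{\Gamma(n)\Gamma(k+\alpha)}{\Gamma(k)\Gamma(n+\alpha)}$, so that
\[
 v_{n+1}=s\,\frac{\Gamma(n)}{\Gamma(n+\alpha)}\sum_{k=1}^n\frac{\Gamma(k+\alpha)}{\Gamma(k)}(k+M)^{-\rho}.
\]
The summand is $\sim k^{\alpha-\rho}$, and $\alpha-\rho>0$ by \eqref{eq:power-parameter-range}, so the sum is $\sim n^{\alpha+1-\rho}/(\alpha+1-\rho)$ by a Stolz--Ces\`aro or integral comparison. Together with $\Gamma(n)/\Gamma(n+\alpha)\sim n^{-\alpha}$ this gives $v_{n+1}\sim Bn^{1-\rho}$. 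Summing the tail gives $x_n\sim Bn^{2-\rho}/(\rho-2)=An^{-\delta}$. Since $y_n=x_{n+1}+sg_n$ and $g_n=O(n^{-\rho})=o(n^{-\delta})$, we also get $y_n\sim An^{-\delta}$.

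\emph{Global smoothness.} By \eqref{eq:slope} it suffices to show $g_n-g_{n+1}\le Lv_{n+2}$. The mean value theorem gives $g_n-g_{n+1}\le\rho(n+M)^{-\rho-1}$, and $v_{n+2}\ge sg_{n+1}=s(n+1+M)^{-\rho}$. Hence it is enough that $\rho(n+M)^{-\rho-1}\le sL(n+1+M)^{-\rho}$, i.e.
\[
 \Bigl(\tfrac{n+1+M}{n+M}\Bigr)^{\rho}\le \frac{sL}{\rho}\,(n+M).
\]
This is the delicate step: the left side is bounded by $(1+1/M)^\rho$, and $M\ge\rho/(sL)$ makes the right side at least $1$ but not obviously large enough to absorb the factor. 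If the crude estimate fails, I will instead use a better lower bound for $v_{n+2}$ that uses momentum. From \eqref{eq:vproduct}, $v_{n+2}\ge s\sum_{k\le n+1}g_kP_{k,n+1}\ge s g_{n+1}(1+\beta_{n+1})$, with more terms available if needed. Alternatively, I will compare $g_n-g_{n+1}$ with $\rho g_{n+1}/(n+M)$ and use $v_{n+2}\ge s(n+M)^{-\rho}\cdot$(const). I expect that combining the two lowest-order terms closes the inequality exactly at $M\ge\rho/(sL)$. I regard this as the main technical obstacle.

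\emph{Loss asymptotics.} Write $F(y_n)=\int_0^{y_n}\psi=\sum_{k\ge n}\int_{y_{k+1}}^{y_k}\psi$. Each interval integral equals $\tfrac12(g_k+g_{k+1})(y_k-y_{k+1})$, and $y_k-y_{k+1}=v_{k+2}+s(g_k-g_{k+1})\sim Bk^{1-\rho}$. Hence
\[
 F(y_n)\sim\sum_{k\ge n}Bk^{1-2\rho}\sim\frac{B}{2(\rho-1)}n^{2-2\rho}=Dn^{2-2\rho}.
\]
A direct computation gives $2\rho-2=2p/(p-2)$. Because $0\le F(x_n)-F(y_n)\le g_{n-1}(x_n-y_n)=O(n^{-\rho}\cdot n^{1-\rho})$, which is $o(n^{2-2\rho})$, we get $F(x_n)\sim Dn^{-2p/(p-2)}$. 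For general $x\in[y_{n+1},y_n]$, monotonicity of $F$ gives $F(y_{n+1})\le F(x)\le F(y_n)$, and $y_{n+1}/y_n\to1$ and $F(y_{n+1})/F(y_n)\to1$. Since $n^{2-2\rho}=(n^{-\delta})^{p}$ and $y_n\sim An^{-\delta}$, this yields $F(x)\sim(D/A^p)|x|^p$ as $x\to0$. In particular $F(x)\ge\tfrac12(D/A^p)|x|^p$ near zero, which is $L(p)$.
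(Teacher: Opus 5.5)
You have a genuine gap in the global smoothness step: you leave it open, and the reduction you chose cannot be completed. Your other steps are sound and track the paper's proof: the velocity asymptotic, the tail sum for $x_n$, $y_n=x_{n+1}+sg_n$, the trapezoidal sum for $F(y_n)$, the transfer from nodes to all $x$, and $L(p)$. Your estimate $0\le F(x_n)-F(y_n)\le g_{n-1}\beta_nv_n$ is a valid substitute for the paper's interlacing $F(y_n)\le F(x_n)\le F(y_{n-1})$.

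The smoothness reduction replaces the slope denominator $v_{n+2}+s(g_n-g_{n+1})$ by $v_{n+2}$. That requires $g_n-g_{n+1}\le Lv_{n+2}$, whereas $\ell_n\le L$ only needs the much weaker $(1-sL)(g_n-g_{n+1})\le Lv_{n+2}$. Your stronger inequality is false for some admissible parameters, so no momentum lower bound on $v_{n+2}$ can rescue it. Take $\rho=10$, $\alpha=10.5$, $sL=0.99$, $M=\rho/(sL)$ and $n=1$. There $v_3=s(g_2+g_1/(\alpha+1))$ exactly. Then $(g_1-g_2)/g_2=(1+1/(1+M))^{10}-1\approx1.37$, while $Lv_3/g_2=sL\bigl(1+(g_1/g_2)/11.5\bigr)\approx1.19$.

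The missing idea is to keep the $s(g_n-g_{n+1})$ term. Since $v_{n+2}=\beta_{n+1}v_{n+1}+sg_{n+1}\ge sg_{n+1}$, the denominator is at least $sg_{n+1}+s(g_n-g_{n+1})=sg_n$. Hence
\[
 \ell_n\le\frac{g_n-g_{n+1}}{sg_n}=\frac1s\Bigl[1-\bigl(1+\tfrac1{n+M}\bigr)^{-\rho}\Bigr]\le\frac{\rho}{s(n+M)}\le\frac{\rho}{sM}\le L,
\]
where the middle inequality is Bernoulli's inequality $(1+u)^{-\rho}\ge1-\rho u$. Normalizing by $g_n$ rather than $g_{n+1}$ removes the factor $\bigl((n+1+M)/(n+M)\bigr)^{\rho}$ that defeated your estimate. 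The bound then closes exactly at $M\ge\rho/(sL)$, with no momentum terms needed. With all slopes at most $L$, Lemma~\ref{lem:inverse} supplies the $L$-Lipschitz derivative and the remaining qualitative properties.
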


\begin{proof}[Proof of \cref{lem:power-leading}]
\proofstep{1}{Recover the velocity and position asymptotics}
The gamma-product formula \eqref{eq:vproduct} gives
\[
 v_{n+1}
 =s\frac{\Gamma(n)}{\Gamma(n+\alpha)}
   \sum_{k=1}^n\frac{\Gamma(k+\alpha)}{\Gamma(k)}(k+M)^{-\rho}.
\]
The standard gamma-ratio asymptotic
\citep[Eq.~5.11.12]{NIST:DLMF} gives separately
\[
 \frac{\Gamma(k+\alpha)}{\Gamma(k)}\sim k^\alpha,
 \qquad (k+M)^{-\rho}\sim k^{-\rho},
 \qquad \frac{\Gamma(n)}{\Gamma(n+\alpha)}\sim n^{-\alpha}.
\]
The summand is therefore asymptotic to $k^a$, with
$a=\alpha-\rho>0$. Integral comparison gives
\[
 \frac{n^{a+1}}{a+1}
 =\int_0^n t^a\,dt
 \le\sum_{k=1}^n k^a
 \le\int_1^{n+1}t^a\,dt
 \sim\frac{n^{a+1}}{a+1}.
\]
For any fixed relative error, the original summand lies between the
corresponding multiples of $k^a$ once $k$ is large. Its finite initial
sum is $o(n^{a+1})$, so the same asymptotic holds for the full sum.
Multiplying by $sn^{-\alpha}$ yields
\[
 v_{n+1}\sim Bn^{1-\rho},\qquad
 B=\frac{s}{\alpha+1-\rho}.
\]
To sum the velocity tail, use $\rho>2$ and
\[
 \int_n^\infty t^{1-\rho}\,dt
 \le\sum_{m=n}^\infty m^{1-\rho}
 \le n^{1-\rho}+\int_n^\infty t^{1-\rho}\,dt,
 \qquad
 \int_n^\infty t^{1-\rho}\,dt=\frac{n^{2-\rho}}{\rho-2}.
\]
The extra term is $o(n^{2-\rho})$. Bounding $v_{m+1}$ between
$(1\pm\varepsilon)Bm^{1-\rho}$ for every sufficiently large $m$
therefore gives
$x_n=\sum_{m=n}^\infty v_{m+1}\sim Bn^{2-\rho}/(\rho-2)$.
Also $x_{n+1}/x_n\to1$ and
$sg_n/x_n=O(n^{-2})\to0$. Thus $y_n=x_{n+1}+sg_n$ yields
\[
 x_n\sim y_n\sim An^{-\delta},\qquad A=\frac B\delta.
\]
\proofstep{2}{Integrate the derivative and pass from nodes to a neighborhood}
Write $\psi=F'$ for the derivative from this construction. It is affine
on each interpolation interval $[y_{k+1},y_k]$, so
\[
 F(y_n)=\sum_{k=n}^{\infty}
         \frac{g_k+g_{k+1}}2(y_k-y_{k+1}).
\]
Indeed, each term is the exact trapezoidal integral over
$[y_{k+1},y_k]$, and these intervals partition $(0,y_n]$.
The finite sum through $k=N$ equals $F(y_n)-F(y_{N+1})$;
letting $N\to\infty$ gives the formula because $y_{N+1}\to0$
and $F$ is continuous with $F(0)=0$.
The mean value theorem gives
$g_k-g_{k+1}=O(k^{-\rho-1})$, so
$s(g_k-g_{k+1})/v_{k+2}=O(k^{-2})\to0$.
Hence \eqref{eq:mesh} implies
\[
 y_k-y_{k+1}=v_{k+2}+s(g_k-g_{k+1})\sim Bk^{1-\rho},
 \qquad \frac{g_k+g_{k+1}}2\sim k^{-\rho}.
\]
The area summand is asymptotic to $Bk^{1-2\rho}$. Applying the same
tail comparison, now with exponent $1-2\rho<-1$, gives
$\sum_{k=n}^\infty k^{1-2\rho}\sim n^{2-2\rho}/(2\rho-2)$, and thus
\begin{equation}
 F(y_n)\sim Dn^{2-2\rho},\qquad
 D=\frac{B}{2(\rho-1)}.
 \label{eq:power-nodal-asymptotic}
\end{equation}
For $n\ge2$, the identities $y_n=x_n-\beta_nv_n\le x_n$ and
$y_{n-1}=x_n+sg_{n-1}\ge x_n$ show that
\[
 F(y_n)\le F(x_n)\le F(y_{n-1}).
\]
Both bounding quantities divided by $Dn^{2-2\rho}$ tend to one,
because $((n-1)/n)^{2-2\rho}\to1$. This proves the same asymptotic
for $F(x_n)$. The exponent conversion is
\[
 p-2=\frac2\delta,\qquad
 \frac{2p}{p-2}=p\delta=2(\rho-1).
\]
In particular, $F(y_n)/y_n^p\to D/A^p$.
For $x\in[y_{n+1},y_n]$, monotonicity gives
\[
 \frac{F(y_{n+1})}{y_n^p}
 \le\frac{F(x)}{x^p}
 \le\frac{F(y_n)}{y_{n+1}^p}.
\]
For example, the left bound equals
$[F(y_{n+1})/y_{n+1}^p](y_{n+1}/y_n)^p\to D/A^p$;
the right bound has the same limit. This proves the asserted
asymptotic for every $x\downarrow0$, not just at the nodes.
Choose $\varepsilon>0$ such that
$F(x)\ge[D/(2A^p)]x^p$ for $0\le x\le\varepsilon$.
By evenness, $F(x)\ge K|x|^p$ for $|x|\le\varepsilon$, with
$K=D/(2A^p)>0$. This is $L(p)$.

Similarly, on the same interval,
\[
 \frac{g_{n+1}}{y_n^{p-1}}
 \le\frac{\psi(x)}{x^{p-1}}
 \le\frac{g_n}{y_{n+1}^{p-1}}.
\]
Since $\delta(p-1)=\rho$ and $y_{n+1}/y_n\to1$, both bounds tend
to $A^{-(p-1)}$. Consequently, as $x\downarrow0$,
\[
 \frac{xF'(x)}{F(x)}\longrightarrow
 \frac{A^{-(p-1)}}{D/A^p}=\frac AD=p,
\]
where $A/D=2(\rho-1)/(\rho-2)=p$.
Since $F$ is even and $F'$ is odd, the same limit holds as $x\uparrow0$.

\proofstep{3}{Bound all interpolation slopes}
Let $\ell_n$ denote the slope of $\psi$ on $[y_{n+1},y_n]$, as
defined in \eqref{eq:slope}.
To enforce the prescribed smoothness, the recurrence gives
$v_{n+2}=\beta_{n+1}v_{n+1}+sg_{n+1}\ge sg_{n+1}$.
Thus the slope denominator obeys
$v_{n+2}+s(g_n-g_{n+1})\ge sg_n>0$, and
\[
 0\le\ell_n
 \le\frac{g_n-g_{n+1}}{sg_n}
 =\frac1s\left[1-\left(\frac{n+M}{n+M+1}\right)^\rho\right]
 \le\frac{\rho}{s(n+M)}.
\]
The last inequality follows, with $t=n+M>0$, from
\[
 1-\left(\frac{t}{t+1}\right)^\rho
 =1-(1+1/t)^{-\rho}
 =\int_0^{1/t}\rho(1+u)^{-\rho-1}\,du\le\frac\rho t.
\]
Consequently, $M\ge\rho/(sL)$ gives
$\ell_n\le\rho/[s(n+M)]\le\rho/(sM)\le L$ on every interpolation
interval. The construction sets $\psi(0)=0$, continues $\psi$ constantly
as $g_1$ for $x\ge y_1$, and extends it oddly to the negative axis.
Step~3 of the proof of \cref{lem:inverse} shows that these extensions
preserve continuity, monotonicity, and the global $L$-Lipschitz bound,
including at zero. It also proves that
$F(x)=\int_0^x\psi(u)\,du$ is even and coercive, has unique minimizer
zero, and realizes the prescribed CD orbit.

\end{proof}

\subsection{The first correction and endpoint flatness}

The previous lemma implies $xF'(x)/F(x)\to p$, which suffices for
$H(\beta)$ when $\beta<p$. The endpoint $H(p)$ needs more: the
ratio must approach $p$ from above. The shift $M$ controls this sign
without changing the leading exponent.

\begin{lemma}[Endpoint flatness]\label{lem:power-flatness}
Under the notation and assumptions of \cref{lem:power-leading}, choose
$M\ge\alpha+\rho/(sL)$. Then the same loss satisfies $H(p)$, and hence
$H(\beta)$ for every $2<\beta\le p$.
\end{lemma}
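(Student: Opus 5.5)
The plan is to restate $H(p)$ as a monotonicity statement and then verify it by a second-order expansion at the query points. Set $\Phi(x)=x\psi(x)-pF(x)$ for $x\ge0$, where $\psi=F'$. By evenness of $F$ and oddness of $\psi$ (and $x^*=0$, $F^*=0$), $H(p)$ on $|x|\le\eta$ is exactly $\Phi\ge0$ on $[0,\eta]$. Equivalently, $F(x)/x^p$ is nondecreasing near $0$. By \cref{lem:power-leading} this ratio tends to $D/A^p$, so the content is that the limit is approached from below. Once $H(p)$ holds, $H(\beta)$ for $2<\beta\le p$ follows at once, because $F\le xF'/p\le xF'/\beta$ and $xF'(x)\ge0$.

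\emph{Step 1 (nodal expansion to first correction).} I will refine the asymptotics of \cref{lem:power-leading} by one order in $1/n$, keeping track of $M$ and $\alpha$ explicitly. Write
\[
 g_n=n^{-\rho}\bigl(1-\rho M/n+O(n^{-2})\bigr),\qquad
 \frac{\Gamma(k+\alpha)}{\Gamma(k)}=k^\alpha\Bigl(1+\frac{\alpha(\alpha-1)}{2k}+O(k^{-2})\Bigr),
\]
together with the analogous expansion of $\Gamma(n)/\Gamma(n+\alpha)$. Inserting these into \eqref{eq:vproduct} and using Euler--Maclaurin for $\sum k^{a}$ and $\sum k^{a-1}$ gives
\[
 v_{n+1}=Bn^{1-\rho}\bigl(1+b_1/n+O(n^{-2})\bigr).
\]
Summing the tail then gives $x_n$ with a correction $c_1/n$. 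From $y_n=x_{n+1}+sg_n$ and the exact trapezoid formula for $F(y_n)$, I obtain expansions of the form
\[
 y_n=An^{-\delta}\bigl(1+e_1/n+\cdots\bigr),\qquad
 F(y_n)=Dn^{2-2\rho}\bigl(1+f_1/n+\cdots\bigr).
\]
Every coefficient here is affine in $M$. The leading terms of $\Phi(y_n)=y_ng_n-pF(y_n)$ cancel, because $A/D=p$. What remains is $\Phi(y_n)=\kappa\,n^{1-2\rho}+O(n^{-2\rho})$, where $\kappa$ is an explicit affine function of $M$, and I expect it to be increasing in $M$. The target is to show $\kappa>0$ as soon as $M\ge\alpha+\rho/(sL)$. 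The lower bound $M\ge\alpha$ should be exactly what dominates the $\alpha$-dependent gamma correction. The term $\rho/(sL)$ is only there to keep the Lipschitz bound from \cref{lem:power-leading}.

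\emph{Step 2 (between nodes).} On each interval $[y_{n+1},y_n]$ the derivative $\psi$ is affine with slope $\ell_n$, so $\Phi'(x)=x\ell_n-(p-1)\psi(x)$ there. The asymptotics give $\ell_n\sim\rho n^{-\rho-1}/(Bn^{1-\rho})$, and hence $x\ell_n\sim(\rho/\delta)g_n$. Since $p-1=\rho/\delta$, the leading terms of $\Phi'$ also cancel, and $\Phi'(x)=O(n^{-\rho-1})$ on this interval. The interval has length $O(n^{1-\rho})$, so $\Phi(x)=\Phi(y_n)+O(n^{-2\rho})$ throughout it. This error is one order smaller than $\kappa n^{1-2\rho}$, so $\Phi>0$ on the whole interval for $n$ large. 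That gives $H(p)$ on some $(0,\eta]$, and on $[-\eta,0)$ by symmetry.

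The main obstacle is Step 1: extracting the first-order coefficient $\kappa$ exactly, without sign slips, from three nested expansions (velocity, tail sum, and trapezoid sum including the $s(g_k-g_{k+1})$ mesh term), and confirming that $M\ge\alpha$ makes it positive. I would first do the computation in the continuum form, replacing sums by integrals with explicit $\tfrac12$ Euler--Maclaurin corrections. I would then check the resulting sign numerically against the $p=4$, $M=32$ instance from Section~5 before writing the rigorous error bounds.
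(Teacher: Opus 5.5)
Your reduction of $H(p)$ to $\Phi(x)=x\psi(x)-pF(x)\ge0$ near $0$ is correct. Your Step~2 is also sound: the leading terms of $\Phi'=x\ell_n-(p-1)\psi$ cancel, so leading-order asymptotics already give $\Phi'=o(n^{-\rho})$ on $[y_{n+1},y_n]$, hence a variation $o(n^{1-2\rho})$ per cell. The gap is that you never establish the only nontrivial claim, $\kappa>0$ for $M\ge\alpha$. Saying ``I expect it to be increasing in $M$'' and planning a numerical check do not prove a sign, and this sign is the whole content of endpoint flatness; the paper's Steps~1--2 are devoted to it. Your route does close if you carry the expansion through. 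Use the velocity expansion $v_{n+1}=Bn^{1-\rho}\bigl(1+V_1/n+o(n^{-1})\bigr)$ with $V_1=\frac{a+1}{2}+\frac{\alpha(\alpha-1)}{2a}-\frac{\rho M(a+1)}{a}$ and $a=\alpha-\rho$. Take the cell lengths from the mesh identity $y_k-y_{k+1}=v_{k+2}+s(g_k-g_{k+1})$; differencing the expansion of $y_k$ would lose the $o(k^{-1})$ control. The trapezoid tail and $y_ng_n$ then give
\[
 \kappa=\frac{A H_1}{2\rho-1},\qquad
 H_1=\frac{\alpha\rho}{a(\rho-1)}\left[M-\frac{\alpha-1}{2}-\frac{a(\alpha+2-2\rho)}{2\alpha}\right].
\]
The subtracted terms total at most $(\alpha-1)/2$ when $\alpha+2-2\rho\le0$. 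Otherwise they total less than $(2\alpha-3)/2$, since $0<a<\alpha$ and $\rho>2$. Either way $M\ge\alpha$ forces $\kappa>0$. This also shows that $M\ge\alpha$ is a convenient sufficient bound, not a sharp threshold coming from the gamma correction alone.

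Compared with the paper, you place the first-order work on $F(y_n)$ via the trapezoid tail. The paper instead expands only $g_n/y_n^{p-1}=A^{-(p-1)}\bigl(1+H_1/n+o(n^{-1})\bigr)$ at the nodes. It then lifts this to $\psi(x)=C_0x^{p-1}\bigl(1+cx^{\varepsilon}+o(x^{\varepsilon})\bigr)$ uniformly, using the linear-interpolation remainder for a smooth model, and integrates. Your cell estimate for $\Phi'$ is a lighter substitute for that lifting, at the cost of a heavier nodal computation.

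Two minor corrections. First, the relative remainders are in general only $o(n^{-1})$, not $O(n^{-2})$. For $0<a<1$ the $O(1)$ terms in the partial sums have relative order $n^{-a-1}\gg n^{-2}$, and $a=1$ may produce a logarithm. So write $\Phi(y_n)=\kappa n^{1-2\rho}+o(n^{1-2\rho})$, which still suffices. Second, a nondecreasing $F(x)/x^p$ approaches $D/A^p$ from above as $x\downarrow0$, not from below.
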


\begin{proof}[Proof of \cref{lem:power-flatness}]
\proofstep{1}{Retain the first correction in the nodal data}
Write $K_\alpha=\alpha(\alpha-1)/2$. With $M$ fixed, the gamma-ratio
expansion \citep[Eq.~5.11.13]{NIST:DLMF} and Taylor expansion give
\[
 \frac{\Gamma(k+\alpha)}{\Gamma(k)}
 =k^\alpha\left(1+\frac{K_\alpha}{k}+O(k^{-2})\right),
 \qquad
 (k+M)^{-\rho}=k^{-\rho}
       \left(1-\frac{\rho M}{k}+O(k^{-2})\right).
\]
Multiplying these expressions, and inverting the gamma ratio for the
prefactor in the velocity formula, yields
\begin{equation*}
\begin{aligned}
 \frac{\Gamma(k+\alpha)}{\Gamma(k)}(k+M)^{-\rho}
 &=k^a\left(1+\frac{K_\alpha-\rho M}{k}+O(k^{-2})\right),\\
 \frac{\Gamma(n)}{\Gamma(n+\alpha)}
 &=n^{-\alpha}\left(1-\frac{K_\alpha}{n}+O(n^{-2})\right).
\end{aligned}
\end{equation*}
Because $a=\alpha-\rho>0$, Euler--Maclaurin summation
\citep[Eq.~2.10.1]{NIST:DLMF} gives
\begin{align*}
 \sum_{k=1}^n k^a
 &=\frac{n^{a+1}}{a+1}+\frac12n^a+O(1+n^{a-1}),\\
 \sum_{k=1}^n k^{a-1}&=\frac{n^a}{a}+o(n^a),\\
 \sum_{k=1}^n O(k^{a-2})
 &=\begin{cases}
 O(1),&0<a<1,\\
 O(\log n),&a=1,\\
 O(n^{a-1}),&a>1.
 \end{cases}
\end{align*}
For the first line, the integral and upper-endpoint terms are displayed;
the lower-endpoint and derivative-remainder terms are
$O(1+n^{a-1})$. The second line follows by integral comparison,
and the third by summing the absolute remainder bound.
Every remainder is $o(n^a)$ for the indicated range of $a$.
Adding the leading and first-correction terms therefore yields
\[
 \sum_{k=1}^n\frac{\Gamma(k+\alpha)}{\Gamma(k)}(k+M)^{-\rho}
 =\frac{n^{a+1}}{a+1}
  +\left(\frac12+\frac{K_\alpha-\rho M}{a}\right)n^a+o(n^a).
\]
Multiplying this sum by
$sn^{-\alpha}(1-K_\alpha/n+O(n^{-2}))$ gives the leading coefficient
$B=s/(a+1)$ and the relative $n^{-1}$ coefficient
$(a+1)[1/2+(K_\alpha-\rho M)/a]-K_\alpha$. Thus
\begin{equation}
 v_{n+1}=Bn^{-\delta-1}
   \left(1+\frac{V_1}{n}+o(n^{-1})\right),\qquad
 V_1=(a+1)\left(\frac12+\frac{K_\alpha-\rho M}{a}\right)-K_\alpha.
 \label{eq:power-first-velocity}
\end{equation}
For the position correction the starting index of the tail matters.
Because $x_{n+1}=\sum_{m=n+1}^\infty v_{m+1}$, use
\begin{align*}
 \sum_{m=n+1}^{\infty}m^{-\delta-1}
 &=\frac{n^{-\delta}}\delta-\frac12n^{-\delta-1}
       +O(n^{-\delta-2}),\\
 \sum_{m=n+1}^{\infty}m^{-\delta-2}
 &=\frac{n^{-\delta-1}}{\delta+1}+O(n^{-\delta-2}).
\end{align*}
These are the tail forms of Euler--Maclaurin; the minus sign in the
first endpoint correction reflects exclusion of $m=n$.
The velocity remainder is $o(m^{-\delta-2})$, whose tail is
$o(n^{-\delta-1})$: for any $\varepsilon>0$, its absolute value is
eventually bounded by $\varepsilon m^{-\delta-2}$, and the second
tail estimate applies. Hence
\[
 x_{n+1}=\frac B\delta n^{-\delta}
  +B\left(-\frac12+\frac{V_1}{\delta+1}\right)n^{-\delta-1}
  +o(n^{-\delta-1}).
\]
Adding $sg_n=O(n^{-\delta-2})=o(n^{-\delta-1})$ to get $y_n$
and factoring out $An^{-\delta}$, we obtain
\begin{equation}
 y_n=An^{-\delta}\left(1+\frac{Y_1}{n}+o(n^{-1})\right),\qquad
 Y_1=-\frac\delta2+\frac{\delta V_1}{\delta+1}.
 \label{eq:power-first-position}
\end{equation}
Since $p-1=\rho/\delta$, Taylor expansion of the denominator gives
\[
 y_n^{p-1}=A^{p-1}n^{-\rho}
    \left(1+\frac{(p-1)Y_1}{n}+o(n^{-1})\right).
\]
Divide $g_n=n^{-\rho}(1-\rho M/n+O(n^{-2}))$ by this expression,
using $(1+t)^{-1}=1-t+O(t^2)$ as $t\to0$. The resulting coefficient
is $H_1=-\rho M-(p-1)Y_1$, so
\begin{equation}
 \frac{g_n}{y_n^{p-1}}
 =A^{-(p-1)}\left(1+\frac{H_1}{n}+o(n^{-1})\right),
 \qquad H_1=-\rho M+\frac\rho2-\frac{\rho V_1}{\rho-1}.
 \label{eq:power-nodal-correction}
\end{equation}
\proofstep{2}{Choose the sign through the shift}
First simplify the velocity coefficient to
$V_1=(a+1)/2+K_\alpha/a-\rho M(a+1)/a$.
Then
\begin{align*}
 \frac{H_1}{\rho}
 &=\left[-1+\frac{\rho(a+1)}{a(\rho-1)}\right]M
   +\frac12-\frac{a+1}{2(\rho-1)}
   -\frac{K_\alpha}{a(\rho-1)}\\
 &=\frac{\alpha M}{a(\rho-1)}
   +\frac{2\rho-\alpha-2}{2(\rho-1)}
   -\frac{\alpha(\alpha-1)}{2a(\rho-1)}.
\end{align*}
Here the coefficient of $M$ uses $a+\rho=\alpha$.
Factoring out $\alpha/[a(\rho-1)]$ proves
\[
 H_1=\frac{\alpha\rho}{a(\rho-1)}
 \left[M-\frac{\alpha-1}{2}
           -\frac{(\alpha-\rho)(\alpha+2-2\rho)}{2\alpha}\right].
\]
The coefficient of $M$ is positive. To bound the two subtracted terms,
if $\alpha+2-2\rho\le0$ their sum is at most $(\alpha-1)/2<\alpha$.
Otherwise, $0<a<\alpha$ and $\rho>2$ imply
\[
 0<a(\alpha+2-2\rho)<\alpha(\alpha-2),
 \qquad
 \frac{\alpha-1}{2}
 +\frac{a(\alpha+2-2\rho)}{2\alpha}
 <\frac{2\alpha-3}{2}<\alpha.
\]
Thus in either case
$M\ge\alpha+\rho/(sL)$ ensures both $H_1>0$ and the smoothness bound
from \cref{lem:power-leading}.

\proofstep{3}{Show that interpolation preserves the correction}
A nodal inequality alone would not establish $H(p)$ for every nearby
point. We therefore compare the affine interpolant with the smooth
function described by the first two terms of the nodal expansion. Set
\[
 \varepsilon=\delta^{-1},\qquad
 C_0=A^{-(p-1)},\qquad c=H_1A^{-\varepsilon}>0.
\]
Raising $y_n=An^{-\delta}(1+O(n^{-1}))$ to the power
$\varepsilon=1/\delta$ gives
$y_n^\varepsilon=A^\varepsilon n^{-1}(1+O(n^{-1}))$.
Thus $n^{-1}=A^{-\varepsilon}y_n^\varepsilon(1+o(1))$, and
\eqref{eq:power-nodal-correction} becomes
\[
 g_n=C_0y_n^{p-1}
       \left(1+c y_n^\varepsilon+o(y_n^\varepsilon)\right).
\]
Let $h(x)=C_0x^{p-1}(1+cx^\varepsilon)$ for $x>0$.
On the interval $I_n=[y_{n+1},y_n]$, the leading mesh estimate gives
\[
 \frac{|I_n|}{y_n}
 =\frac{y_n-y_{n+1}}{y_n}\sim\frac{B}{An}=\frac\delta n,
 \qquad \frac{y_{n+1}}{y_n}\to1.
\]
Write $e_n=g_n-h(y_n)=o(y_n^{p-1}/n)$. At
$x=(1-t)y_{n+1}+ty_n$, $0\le t\le1$, let
$\mathcal I_nh(x)=(1-t)h(y_{n+1})+th(y_n)$.
Since $\psi$ uses the same affine weights,
\[
 |\psi(x)-\mathcal I_nh(x)|
 \le\max\{|e_n|,|e_{n+1}|\}
 =o(y_n^{p-1}/n)
\]
uniformly on $I_n$. The last equality uses the adjacent-node ratio
and $(n+1)/n\to1$. Also
\[
 h''(x)=C_0(p-1)(p-2)x^{p-3}
 +C_0c(p-1+\varepsilon)(p-2+\varepsilon)x^{p-3+\varepsilon}.
\]
On each sufficiently late $I_n$, $y_n/2\le x\le y_n$, so
$\sup_{x\in I_n}|h''(x)|=O(y_n^{p-3})$, with an $n$-independent
constant. The classical linear interpolation remainder
\citep[Eq.~3.3.5]{NIST:DLMF} gives
$h(x)-\mathcal I_nh(x)=h''(\zeta_x)(x-y_{n+1})(x-y_n)/2$
for an intermediate point $\zeta_x\in I_n$.
Since $|(x-y_{n+1})(x-y_n)|\le |I_n|^2/4$, it bounds the error by
\[
 \frac18\sup_{x\in I_n}|h''(x)|\,|I_n|^2
 =O(y_n^{p-1}n^{-2})=o(y_n^{p-1}/n).
\]
Thus the interpolation error is one order smaller than the positive
correction. The estimate remains valid for $2<p<3$, when $h''$ may be
unbounded at zero, because it is used separately on each $I_n$,
where $x$ is comparable to $y_n$.
Combining the two error bounds gives
$\sup_{x\in I_n}|\psi(x)-h(x)|=o(y_n^{p-1}/n)$.
Uniformly on $I_n$, $x/y_n\to1$ and
$nx^\varepsilon\to A^\varepsilon>0$; dividing by
$x^{p-1+\varepsilon}$ therefore makes this error tend to zero.
It follows, uniformly as $x\downarrow0$, that
\begin{equation}
 \psi(x)=C_0x^{p-1}
           \left(1+cx^\varepsilon+o(x^\varepsilon)\right).
 \label{eq:power-continuous-correction}
\end{equation}
\proofstep{4}{Integrate and verify the endpoint inequality}
To justify integration of the remainder, write
$\psi(x)=C_0x^{p-1}+C_0c x^{p-1+\varepsilon}
 +x^{p-1+\varepsilon}\eta(x)$, where $\eta(x)\to0$.
Then
\[
 \left|\int_0^x u^{p-1+\varepsilon}\eta(u)\,du\right|
 \le\sup_{0<u\le x}|\eta(u)|\,
       \frac{x^{p+\varepsilon}}{p+\varepsilon}
 =o(x^{p+\varepsilon}).
\]
Integrating the two explicit powers therefore gives
\[
 F(x)=\frac{C_0}{p}x^p
 \left(1+\frac{cp}{p+\varepsilon}x^\varepsilon
           +o(x^\varepsilon)\right).
\]
Divide the expansion of $xF'(x)$ by this expression, again using
$(1+t)^{-1}=1-t+O(t^2)$. The relative correction coefficient is
$c-cp/(p+\varepsilon)=c\varepsilon/(p+\varepsilon)>0$, so
\[
 \frac{xF'(x)}{F(x)}
 =p\left(1+\frac{c\varepsilon}{p+\varepsilon}x^\varepsilon
             +o(x^\varepsilon)\right)>p
\]
for all sufficiently small $x>0$: the $o(x^\varepsilon)$ term can
be bounded in absolute value by half of the displayed positive
correction. Evenness gives $xF'(x)=|x|F'(|x|)$ for $x<0$, proving
the same inequality
for negative $x$, while both sides of $pF(x)\le xF'(x)$ vanish at zero.
This proves $H(p)$. Since $F(x)\ge0$ and $\beta\le p$,
$\beta F(x)\le pF(x)\le xF'(x)$ on the same neighborhood, proving
every required $H(\beta)$.

\end{proof}

\subsection{Preserving the geometry in a prescribed dimension}

The quadratic transverse terms used for \cref{thm:no-gain} would violate
$H(p)$ when $p>2$. Here the extra coordinates must instead have local
$p$-power geometry. Capping their derivatives away from zero enforces
global smoothness while retaining coercivity.

\begin{lemma}[Finite-dimensional power witness]\label{lem:power-embedding}
Let $F_1$ be the scalar loss from \cref{lem:power-flatness}. For every
fixed $d\ge1$, there is a coercive convex loss $\widetilde F:\R^d\to\R$
with $L$-Lipschitz gradient, unique minimizer zero, and local conditions
$L(p)$ and $H(p)$, whose exact CD orbit from a suitable initialization
satisfies $\widetilde F(z_n)\sim Dn^{-2p/(p-2)}$ with the same $D>0$.
\end{lemma}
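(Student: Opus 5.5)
Following \cref{alg:inverse-construction}, I would take the separable loss
\[
 \widetilde F(z)=F_1(z_1)+\sum_{i=2}^d h(z_i),\qquad
 h(0)=0,\quad h'(t)=\tfrac{L}{p-1}\operatorname{sgn}(t)\min\{|t|^{p-1},1\},
\]
and initialize at $\widehat x_0=\widehat x_1=(x_1,0,\ldots,0)$, where $x_1$ is the scalar initial point from \cref{lem:power-flatness}. For $d=1$ there is nothing to prove beyond \cref{lem:power-leading,lem:power-flatness}.

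\emph{Smoothness, convexity and coercivity.} The derivative $h'$ is odd, continuous and nondecreasing, so $h$ is even, convex and $h\ge0$ with $h(t)=0$ only at $t=0$. On $[-1,1]$ one has $h''(t)=L|t|^{p-2}\le L$, and $h'$ is constant for $|t|\ge1$. Hence $h'$ is $L$-Lipschitz. The gradient of $\widetilde F$ is coordinatewise, so $\|\nabla\widetilde F(z)-\nabla\widetilde F(w)\|^2\le L^2\|z-w\|^2$, exactly as in Step~4 of the proof of \cref{thm:no-gain}. Outside $[-1,1]$, $h$ grows linearly, so $h$ is coercive. A sublevel set $\{\widetilde F\le H\}$ therefore bounds every coordinate, which gives coercivity. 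The unique minimizer is zero because each summand vanishes only at zero.

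\emph{Orbit.} Since $h'(0)=0$, the transverse coordinates of $x_n$, $y_n$ and the gradient stay zero by induction. The first coordinate follows the scalar orbit exactly, so $\widetilde F(\widehat x_n)=F_1(x_n)\sim Dn^{-2p/(p-2)}$ by \cref{lem:power-leading}.

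\emph{Local geometry.} I expect this to be the only step needing care. For $|t|\le1$, $h(t)=\tfrac{L}{p(p-1)}|t|^p$, so $t h'(t)=p\,h(t)$ holds with equality. By \cref{lem:power-flatness}, $F_1$ satisfies $pF_1(t)\le tF_1'(t)$ for $|t|\le\eta_1$. Summing over coordinates gives, for $\|z\|\le\min\{\eta_1,1\}$,
\[
 p\widetilde F(z)\le\langle\nabla\widetilde F(z),z\rangle,
\]
which is $H(p)$, and hence $H(\beta)$ for $\beta\le p$ since $\widetilde F\ge0$.

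For $L(p)$, take $\|z\|\le\min\{\varepsilon_1,1\}$. Then $F_1(z_1)\ge K_1|z_1|^p$ by \cref{lem:power-leading}, and $h(z_i)=\tfrac{L}{p(p-1)}|z_i|^p$. With $K'=\min\{K_1,L/(p(p-1))\}$,
\[
 \widetilde F(z)\ge K'\sum_i|z_i|^p\ge K' d^{1-p/2}\|z\|^p,
\]
using $\sum_i|z_i|^p\ge d^{1-p/2}\|z\|^p$ for $p\ge2$, which follows from the power-mean inequality. Since the minimizer is unique, $L_{\mathrm{set}}(p)$ reduces to $L(p)$, so it also holds.
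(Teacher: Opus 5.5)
Your proposal is correct and follows the paper's proof essentially step for step. You use the same separable extension: the paper's $\varphi$ with the example choice $R=1$, $\kappa=L/(p-1)$, which is the $h$ of \cref{alg:inverse-construction}. You keep the transverse coordinates at zero by the same induction, obtain $H(p)$ by summing the scalar flatness inequalities, and obtain $L(p)$ from the same power-mean bound $\sum_i|z_i|^p\ge d^{1-p/2}\|z\|^p$ on the neighborhood $\|z\|\le\min\{\cdot,1\}$.
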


\begin{proof}[Proof of \cref{lem:power-embedding}]
The case $d=1$ is \cref{lem:power-leading,lem:power-flatness}.
Choose $r_1,K_1>0$ so that both $F_1(t)\ge K_1|t|^p$ and
$tF_1'(t)\ge pF_1(t)$ hold for $|t|\le r_1$.
For $d>1$, choose $R,\kappa>0$ with
$\kappa(p-1)R^{p-2}\le L$, and define $\varphi(0)=0$ and
\[
 \varphi'(t)=\kappa\,\operatorname{sgn}(t)
               \min\{|t|^{p-1},R^{p-1}\},\qquad
 \widetilde F(z)=F_1(z_1)+\sum_{i=2}^d\varphi(z_i).
\]
For example, $R=1$ and $\kappa=L/(p-1)$ meet this condition.
Integrating the derivative gives the explicit extension
\[
 \varphi(t)=
 \begin{cases}
 \kappa |t|^p/p,&|t|\le R,\\
 \kappa R^p/p+\kappa R^{p-1}(|t|-R),&|t|\ge R.
 \end{cases}
\]
Its derivative is continuous and nondecreasing. Away from the two joins,
its slope is $\kappa(p-1)|t|^{p-2}\le L$ inside $[-R,R]$ and zero
outside. Splitting an interval at the joins proves the global
$L$-Lipschitz bound. Thus $\varphi$ is convex, coercive, nonnegative,
and vanishes only at zero.
For any prescribed height $H>0$, choose a common radius beyond which
each of the finitely many summands exceeds $H$. If $\|z\|_2$ exceeds
$\sqrt d$ times that radius, at least one coordinate does, so
$\widetilde F(z)>H$. This proves coercivity of the sum; its unique
minimizer is zero because every summand is nonnegative and vanishes only
there. Since $F_1'$ and $\varphi'$ are both
$L$-Lipschitz, for all $z,w\in\R^d$,
\[
 \|\nabla\widetilde F(z)-\nabla\widetilde F(w)\|^2
 \le L^2\sum_{i=1}^d|z_i-w_i|^2=L^2\|z-w\|^2.
\]
For $\|z\|_2\le\min\{r_1,R\}$, all scalar local inequalities apply,
and $z_i\varphi'(z_i)=p\varphi(z_i)$ for $i\ge2$.
The norm comparison follows from convexity of $t\mapsto t^{p/2}$:
\[
 \left(\frac1d\sum_{i=1}^d|z_i|^2\right)^{p/2}
 \le\frac1d\sum_{i=1}^d|z_i|^p,
 \qquad
 \sum_{i=1}^d|z_i|^p\ge d^{1-p/2}\|z\|_2^p.
\]
Combining the scalar growth bounds with this comparison, and summing
the scalar flatness inequalities, gives
\[
 \widetilde F(z)
 \ge d^{1-p/2}\min\{K_1,\kappa/p\}\|z\|_2^p,
 \qquad
 \langle\nabla\widetilde F(z),z\rangle\ge p\widetilde F(z).
\]
Consequently, $L(p)$ and $H(p)$ hold in every prescribed finite dimension.
Initialize $z_0=z_1=(x_1,0,\ldots,0)$, where $x_1$ is the scalar
initial point. If two consecutive values of coordinate $i\ge2$ are
zero, its extrapolated value is zero and $\varphi'(0)=0$ keeps the next
value zero. Induction therefore gives $z_n=(x_n,0,\ldots,0)$ and
$\widetilde F(z_n)=F_1(x_n)$ for all $n$. Equation
\eqref{eq:power-nodal-asymptotic} and the interlacing argument therefore
give the claimed exact asymptotic.
\end{proof}

\subsection{Completion of the sharpness statement}

For either parameter regime in Appendix~\ref{app:targets}, the choice
$\rho=2+2/(p-2)$ satisfies $2<\rho<\alpha$, as verified in
\eqref{eq:power-parameter-range}. Choose
$M\ge\alpha+\rho/(sL)$ and apply
\cref{lem:power-leading,lem:power-flatness,lem:power-embedding}.
The resulting loss has all the required properties. In regime~(i),
$H(p)$ implies the prescribed $H(\beta)$; in regime~(ii), the unique
minimizer makes $L(p)$ identical to $L_{\mathrm{set}}(p)$.
For every gain $G(n)\to\infty$, the exact asymptotic gives
\[
 n^{2p/(p-2)}G(n)\bigl(\widetilde F(z_n)-\widetilde F^*\bigr)
 \sim DG(n)\longrightarrow\infty.
\]
Thus no divergent gain improves the rate, proving the sharpness
assertion of \cref{thm:power}. The upper bounds
for the whole class are established in Appendix~\ref{app:upper-transfer}; their
proofs are independent of this construction.

\clearpage
\subsection{Illustrating the two constructions}
\label{app:construction-instances}

\Cref{fig:construction-instances} shows concrete instances of the block
and power constructions developed in Appendices~\ref{app:inverse} and~\ref{app:power}.
Each row follows the same sequence: prescribe the queried gradients,
recover the query points and interpolate the derivative, then integrate
to obtain the loss. The plots illustrate the functions whose properties
were proved above. The figure generator records the recurrence checks
and bounds on the omitted integration tails; Appendix~\ref{app:supplement}
describes the accompanying code and outputs.

\begin{figure}[H]
\centering
\includegraphics[width=\linewidth]{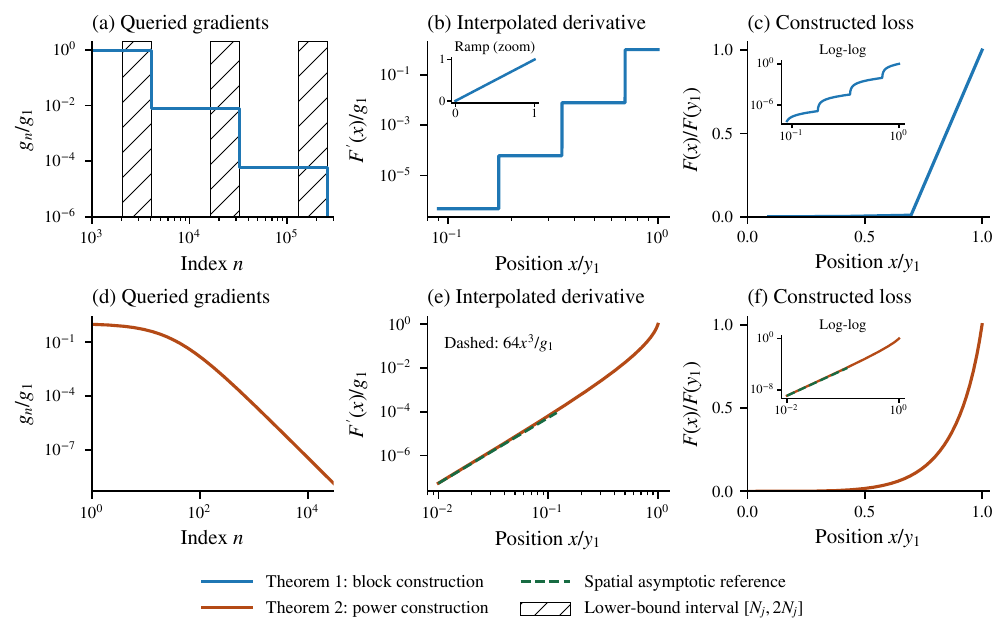}
\caption{Two choices of gradients and the losses they produce, with
$\alpha=4$, $s=1/2$, and $L=1$. Top: a no-gain instance for $G(n)=n$,
using $N_j=2048\cdot8^{j-1}$ and $a_j=2^{-j}$. Hatching marks the
intervals $[N_j,2N_j]$ used in the lower bound. The inset in (b) magnifies one
continuous affine join in $F'$. Bottom: the power instance with $p=4$
and $M=32$, in the flatness regime of \cref{thm:power}.
Panels (c) and (f) use linear axes to display convexity; their insets
retain log--log views of the same curves. The other main panels use
log--log axes. Dashed curves in (e) and the inset of (f) show the spatial
asymptotics $F'(x)\sim64x^3$ and $F(x)\sim16x^4$, respectively.}
\label{fig:construction-instances}
\end{figure}

\FloatBarrier

\section{Local-geometry upper bounds}\label{app:upper-transfer}

This section proves that the power instances attain rates valid for
every objective in the stated classes. The rate mechanisms come from
\citet{Apidopoulos2021} and \citet{aujol2024strongconvergencefistaiterates}. Our technical extension is to remove
the coercivity assumption in the growth-only result in finite dimension.
We first match the iteration conventions and establish eventual local
geometry, then give the two upper-bound arguments separately.

\subsection{Matching the iteration conventions}

The upper-rate analyses of \citet{Apidopoulos2021} and \citet{aujol2024strongconvergencefistaiterates} use a
different iteration index. We shift our index to match their recurrence
and apply the published energy estimates to the same CD orbit.

Let the setting and recurrence be those of Appendix~\ref{app:setting}. Let $T_s=I-s\nabla F$, set
$z_{-1}=x_1$, and put $z_m=x_{m+1}$ for $m\ge0$. Since
$\beta_{m+1}=m/(m+\alpha)$, for $m\ge1$ we have
$y_{m+1}=z_m+[m/(m+\alpha)](z_m-z_{m-1})$ and
$z_{m+1}=T_s(y_{m+1})$. For $m=0$, both expressions reduce to
$y_1=x_1=z_0$ and $z_1=T_s(z_0)$ because $\beta_1=0$.
Thus at every index,
\begin{equation}
 z_{m+1}=T_s\!\left(z_m+\frac{m}{m+\alpha}(z_m-z_{m-1})\right),
 \qquad m\ge0.
 \label{eq:published-index-bridge}
\end{equation}
The coefficient is exactly $m/(m+\alpha)$, as in the cited upper-rate
arguments; in \citet{Apidopoulos2021}, the damping parameter is denoted $b$,
so $b=\alpha$. In particular, the gradient is evaluated at the
extrapolated point inside $T_s$. Our recurrence starts at $m=0$ with
$z_{-1}=z_0$ and zero inertial coefficient. The subsequent energy
inequalities apply to the resulting finite initial states $z_0,z_1$;
their validity does not require these states to coincide. Because
$\beta_1=0$, an arbitrary $x_0$ has no effect on the original iterates
beginning with $x_1$.

\subsection{Local geometry along the convergent orbit}

The local assumptions need only hold eventually, but this must be
justified for both the iterates and the extrapolated query points.
Finite-dimensional convergence provides that localization even when
the objective is not coercive.

\begin{lemma}[Eventual local geometry]\label{lem:localization}
Let $F:\R^d\to\R$ be convex, attain its minimum, and have a globally
$L$-Lipschitz gradient, where $d\ge1$ is finite and $L>0$.
For $\alpha>3$ and a fixed step $0<s<L^{-1}$, let $(x_n)$ and $(y_n)$
be generated by the CD iteration \eqref{eq:app-cd} from any
$x_0,x_1\in\R^d$. Then there is a minimizer $\bar x$ such that
$x_n\to\bar x$ and $y_n\to\bar x$ in Euclidean norm.
Consequently, if $H(\beta)$ and $L(p)$ hold at the unique minimizer,
they hold at all sufficiently late iterates and query points.
If $L_{\mathrm{set}}(p)$ holds, its inequality likewise applies at
all sufficiently late iterates and query points, without uniqueness
or coercivity.
\end{lemma}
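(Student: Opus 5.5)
The plan is to invoke the iterate-convergence theorem of \citet{Chambolle2015} for the CD scheme and then transfer convergence to the query points. First, \cref{eq:published-index-bridge} identifies our orbit with the published recurrence $z_{m+1}=T_s(z_m+\frac{m}{m+\alpha}(z_m-z_{m-1}))$, with $z_m=x_{m+1}$. For $\alpha>3$ and $0<s<1/L$, a convex $C^{1,1}$ function attaining its minimum has weakly convergent iterates, and in finite dimension weak and norm convergence coincide. Hence $x_n\to\bar x$ for some $\bar x\in X^*$. The standing assumptions (nonempty $X^*$, $s<1/L$, $\alpha>3$) are exactly the hypotheses of that theorem, and the proof does not use coercivity anywhere. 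If I want to avoid relying on the cited statement verbatim, I would sketch its two ingredients: the summability $\sum_n n\|x_n-x_{n-1}\|^2<\infty$ from the energy estimate, and an Opial-type argument that $\|x_n-x^*\|$ converges for every $x^*\in X^*$, so that all cluster points are minimizers.

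Next, the query points: $y_n-x_n=\beta_n(x_n-x_{n-1})$ with $0\le\beta_n<1$. Since $x_n$ converges, it is Cauchy, so $\|x_n-x_{n-1}\|\to0$, and therefore $y_n\to\bar x$ as well. No rate is needed here.

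Localization then follows directly. If the minimizer $x^*$ is unique, then $\bar x=x^*$. Choose $N$ such that $\|x_n-x^*\|$ and $\|y_n-x^*\|$ are at most $\min\{\eta,\varepsilon\}$ for all $n\ge N$; then $H(\beta)$ and $L(p)$ apply pointwise at those points. For $L_{\mathrm{set}}(p)$, use $\dist(x_n,X^*)\le\|x_n-\bar x\|\to0$ and likewise for $y_n$, so the inequality holds eventually without uniqueness, because $\bar x\in X^*$ is a single fixed limit.

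The main obstacle is purely citational. We must check that the cited convergence theorem applies to exactly our index convention and parameter range, including the $m=0$ start with $z_{-1}=z_0$ and arbitrary finite initial states, and that it does not assume coercivity. The index bridge already handles the first point. For the second, I would observe that the energy estimate only needs a minimizer $x^*$ to compare against, so bounded orbits follow from the estimate itself rather than from sublevel compactness.
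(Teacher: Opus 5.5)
Your proposal is correct and follows essentially the same route as the paper's proof: cite the Chambolle--Dossal weak convergence of $(x_n)$ to a minimizer $\bar x$, upgrade it to norm convergence in finite dimension, and transfer it to $y_n$ using $0\le\beta_n<1$ and $x_n-x_{n-1}\to0$. Localization then follows directly for the unique-minimizer conditions, and for $L_{\mathrm{set}}(p)$ via $\dist(\cdot,X^*)\le\|\cdot-\bar x\|$ with $\bar x\in X^*$.
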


\begin{proof}[Proof of \cref{lem:localization}]
The CD convergence theorem \citep{Chambolle2015} gives weak convergence of
$(x_n)$ to a minimizer for $\alpha>3$. In finite dimension this is
norm convergence. Since $x_n-x_{n-1}\to0$ and $0\le\beta_n<1$,
the query points satisfy
\begin{align*}
 \|y_n-\bar x\|
 &\le\|x_n-\bar x\|+\beta_n\|x_n-x_{n-1}\|\\
 &\le2\|x_n-\bar x\|+\|x_{n-1}-\bar x\|\longrightarrow0.
\end{align*}
For a unique minimizer, both sequences therefore enter and remain
in each neighborhood in \eqref{eq:app-H}--\eqref{eq:app-Lp}.
In the set-valued case,
$\dist(x_n,X^*)\le\|x_n-\bar x\|\to0$, and the same holds for
$y_n$. This proves eventual validity of \eqref{eq:app-Lset}.
\end{proof}

We apply the published asymptotic energy inequalities from this eventual
index onward. The original momentum coefficients $m/(m+\alpha)$ are
retained throughout; localization does not restart the algorithm.
A finite prefix affects only the constants in the estimates.

\subsection{Upper rate under flatness and growth}

Numbered references to \citet{Apidopoulos2021} in this subsection use the authors'
\href{https://hal.science/hal-01965095v3/document}{HAL version~3}.
For a unique minimizer, their Theorem~3.2 uses the local conditions
\eqref{eq:app-H}--\eqref{eq:app-Lp}. In our notation it gives
$F(z_m)-F^*=O(m^{-2p/(p-2)})$ whenever $p\ge\beta>2$ and
\[
 \alpha\ge\frac{\beta+2}{\beta-2}.
\]
That result uses the same exponents $\beta,p$, and its step $\gamma$
is our $s$.
The result allows $0<s\le1/L$, so our strict step restriction is
included. Its damping threshold includes equality, as also recorded in
\citet[Table~1]{aujol2024strongconvergencefistaiterates}. By \cref{lem:localization}, the local
inequalities are valid along the required tail. The cited proof uses
an eventual one-step energy estimate (their Lemma~4.3 and equation~(4.28)),
then sums it from an index at which the local conditions hold. That
one-step estimate depends on the current states and the coefficient
$m/(m+\alpha)$, not on equality of the two initial states. Its initial
energy is finite for our $z_0,z_1$, so the same argument applies to the
initialization in \eqref{eq:published-index-bridge}.

To spell out the final index conversion, put $q=2p/(p-2)$ and choose
$C,N$ such that $F(z_m)-F^*\le Cm^{-q}$ for $m\ge N$.
For $n\ge\max\{N+1,2\}$,
\[
 F(x_n)-F^*=F(z_{n-1})-F^*
 \le C(n-1)^{-q}\le 2^q Cn^{-q},
\]
because $n-1\ge n/2$. This proves part~(i)
of \cref{thm:power}.

\subsection{Upper rate under growth alone, without coercivity}

Here the minimizer set may have more than one point. The energy uses
distance to that set and does not require flatness. There are two issues
to check: how local growth becomes available without coercivity, and how
to extract a residual bound from an energy with a negative distance term.

\proofstep{1}{Replace coercivity by convergence in the local-growth step}
Set $\widehat L=1/s$. Since the gradient is $L$-Lipschitz and
$L<\widehat L$, it is also $\widehat L$-Lipschitz. The step
$s=1/\widehat L$ is the one used in
\citet[Theorem~1]{aujol2024strongconvergencefistaiterates}, with nonsmooth term zero. Their coercivity
assumption is used in Lemma~4, through Lemma~1, to obtain eventual validity
of the local growth inequality along the orbit. \Cref{lem:localization} already gives that conclusion for the
present finite-dimensional CD orbit. The
following calculation records the remainder of the rate argument without
coercivity.

\proofstep{2}{Relate distance to residual and use the published energy estimate}
Let $z_m^*$ be the Euclidean projection of $z_m$ onto the closed convex set
$X^*$, and define
\[
 w_m=\frac{2}{\widehat L}\bigl(F(z_m)-F^*\bigr),\qquad
 h_m=\|z_m-z_m^*\|^2,\qquad
 b_m=\frac{m}{m+\alpha},\qquad
 \theta=1+\frac4{p-2}.
\]
By $L_{\mathrm{set}}(p)$ and $\dist(z_m,X^*)\to0$, there is an index $N$
such that $K h_m^{p/2}\le F(z_m)-F^*=(\widehat L/2)w_m$ for
$m\ge N$. Both sides are nonnegative, so taking the power $2/p$ gives
\begin{equation}
 h_m\le C_K w_m^{2/p}\quad(m\ge N),\qquad
 C_K=\left(\frac{\widehat L}{2K}\right)^{2/p}.
 \label{eq:eventual-set-growth}
\end{equation}
This is the estimate required in equations~(49)--(50) of
\citet{aujol2024strongconvergencefistaiterates}.

Put $\lambda=\alpha-1-\theta>0$ and
$\xi=\lambda(\lambda+1-\alpha)=-\lambda\theta<0$, and consider
\begin{align*}
 E_m={}&m^2w_m+
 \|\lambda(z_m-z_m^*)+mb_m(z_m-z_{m-1})\|^2+\xi h_m\\
 &\quad+\lambda m b_m^2\|z_m-z_{m-1}\|^2,
 \qquad J_m=m^\theta E_m.
\end{align*}
We use the one-step energy estimate of
\citet[Lemma~6, Eq.~(52)]{aujol2024strongconvergencefistaiterates} as a cited input. To verify its
applicability and the signs needed below, first note the two elementary
inequalities underlying that estimate. If $u^+=y-s\nabla F(y)$,
smoothness with constant $\widehat L=1/s$ and convexity imply, for any $x$,
\begin{align*}
 F(u^+)-F(x)
 &\le\langle\nabla F(y),y-x\rangle
        -\frac s2\|\nabla F(y)\|^2\\
 &=\frac1{2s}\bigl(\|y-x\|^2-\|u^+-x\|^2\bigr).
\end{align*}
Here the equality follows by expanding
$\|y-s\nabla F(y)-x\|^2$.
Using $y=z_m+b_m(z_m-z_{m-1})$, first with $x=z_m$ and then with
$x=z_m^*$, gives respectively
\begin{align*}
 w_{m+1}-w_m
 &\le b_m^2\|z_m-z_{m-1}\|^2-\|z_{m+1}-z_m\|^2,\\
 w_{m+1}
 &\le\|y-z_m^*\|^2-\|z_{m+1}-z_m^*\|^2.
\end{align*}
Also, projection onto the closed convex minimizer set gives
$\langle z_m-z_m^*,v-z_m^*\rangle\le0$ for every $v\in X^*$.
This follows by minimizing squared distance along
$z_m^*+t(v-z_m^*)$ and taking the right derivative at $t=0$.
In particular, with
\[
 \Delta_m^*=z_{m+1}^*-z_m^*,\qquad
 Q_m=\|\Delta_m^*\|^2
       -2\langle z_m-z_m^*,\Delta_m^*\rangle,
\]
we have $Q_m\ge0$. These facts require neither coercivity nor a
unique minimizer.

To state the cited estimate without confusing its notation with ours,
put $U_m=\|\lambda(z_m-z_m^*)+mb_m(z_m-z_{m-1})\|^2$.
The source's growth exponent $\gamma$ is our $p$, its energy weight
$p$ is our $\theta$, its inertial coefficient $\alpha_m$ is our $b_m$,
and its squared-norm term $b_m$ is our $U_m$.
Its estimate reads
\begin{align*}
 J_{m+1}-J_m\le{}&
  [C_1(m+1)^{\theta+1}+R_1(m)]w_{m+1}\\
 &+[C_2(m+1)^{\theta-1}+R_2(m)]U_{m+1}\\
 &+[C_3(m+1)^{\theta-1}+R_3(m)]h_{m+1}
   -m^\theta B_4(m)Q_m,
\end{align*}
where $R_1(m)=O(m^\theta)$,
$R_2(m),R_3(m)=O(m^{\theta-2})$, and substitution of
$\lambda=\alpha-1-\theta$ gives
\begin{align*}
 C_1&=2-\lambda+\theta=3+2\theta-\alpha<0,\\
 C_2&=2(\lambda+1-\alpha)+\theta=-\theta<0,\\
 C_3&=\lambda(\lambda+1-\alpha)(\theta-2\lambda)
      =\lambda\theta(2\lambda-\theta)>0,\\
 B_4(m)&=2\lambda\theta-
              \frac{\alpha^2\lambda}{m+1+\alpha}
              \longrightarrow2\lambda\theta>0.
\end{align*}
The damping assumption is exactly
$\alpha>3+2\theta=5+8/(p-2)$; it gives
$\lambda>\theta+2>0$, so all the displayed signs follow.
Since each remainder is of lower order than its leading coefficient,
enlarge $N$ until
\begin{align*}
 C_1(m+1)^{\theta+1}+R_1(m)&\le\tfrac12C_1(m+1)^{\theta+1},\\
 C_2(m+1)^{\theta-1}+R_2(m)&\le0,\\
 C_3(m+1)^{\theta-1}+R_3(m)&\le2C_3(m+1)^{\theta-1},
 \qquad B_4(m)\ge0.
\end{align*}
The $U_{m+1}$ and $Q_m$ terms are then nonpositive and can be dropped.
With $c_0=-C_1/2>0$ and $c_1=2C_3>0$, this proves
\begin{equation}
 J_{m+1}-J_m
 \le-c_0(m+1)^{\theta+1}w_{m+1}
       +c_1(m+1)^{\theta-1}h_{m+1}.
 \label{eq:growth-transfer-energy}
\end{equation}
This is the source's equation~(57), with its hypotheses and coefficient
choices made explicit.

\proofstep{3}{Bound the energy increments}
The exponent identity needed for the substitution is
\[
 \theta-1=\frac4{p-2},\qquad
 \theta+1=2+\frac4{p-2}=\frac{2p}{p-2},\qquad
 \frac{2(\theta+1)}p=\theta-1.
\]
Thus substituting
\eqref{eq:eventual-set-growth} into
\eqref{eq:growth-transfer-energy} bounds its right side by
$-c_0u+c_1C_Ku^{2/p}$, where
$u=(m+1)^{\theta+1}w_{m+1}$. In detail,
$(m+1)^{\theta-1}w_{m+1}^{2/p}=u^{2/p}$ by the displayed identity.
To bound the scalar expression explicitly, let
$\nu=2/p\in(0,1)$, $A_0=c_1C_K>0$, and
$T_0=(2A_0/c_0)^{1/(1-\nu)}$.
For $u\ge T_0$, $A_0u^\nu\le(c_0/2)u$ and the expression is
nonpositive; for $0\le u\le T_0$, it is at most
$M_0:=A_0T_0^\nu$. Therefore
\[
 J_m=J_N+\sum_{j=N}^{m-1}(J_{j+1}-J_j)
 \le J_N+M_0(m-N)\le C m\qquad(m\ge N),
\]
where $N\ge1$ and, for example, $C=M_0+|J_N|+1>0$ suffices.
\proofstep{4}{Control the negative term and recover the residual rate}
An upper bound on $J_m$ alone is not yet a residual bound because
$\xi<0$. All terms in $E_m$ other than $\xi h_m$ are nonnegative, so
dividing $J_m\le Cm$ by $m>0$ and using $J_m=m^\theta E_m$ gives
\[
 m^{\theta+1}w_m-|\xi|m^{\theta-1}h_m\le C.
\]
Using \eqref{eq:eventual-set-growth} once more, we obtain
\[
 X_m-|\xi|C_K X_m^{2/p}\le C,
 \qquad X_m=m^{\theta+1}w_m.
\]
Indeed, $h_m\le C_Kw_m^\nu$ implies
$-\lvert\xi\rvert m^{\theta-1}h_m
\ge-\lvert\xi\rvert C_K X_m^\nu$, so replacing the negative term
preserves the stated upper inequality for the smaller expression.
To make boundedness explicit, put $B_0=|\xi|C_K$ and
$V_0=(2B_0)^{1/(1-\nu)}$. If $X_m\ge V_0$, then
\[
 B_0X_m^\nu\le\tfrac12X_m,
 \qquad \tfrac12X_m\le X_m-B_0X_m^\nu\le C.
\]
Consequently $X_m\le\max\{V_0,2C\}$ in both cases. Since
$F(z_m)-F^*=(\widehat L/2)w_m$, we conclude
\[
 F(z_m)-F^*=O(m^{-\theta-1})
           =O(m^{-2p/(p-2)}).
\]
This reproduces the rate step in equations~(58)--(62) of
\citet{aujol2024strongconvergencefistaiterates} with eventual growth supplied by finite-dimensional CD
convergence. It proves part~(ii), including noncoercive objectives and
non-singleton minimizer sets. Finally, the fixed shift $z_m=x_{m+1}$
does not change either upper-rate exponent.

\section{The quadratic boundary}\label{app:quadratic}

The power constructions require $p>2$. To illustrate the different
behavior at the quadratic boundary, we give an explicit quadratic loss
and prove a geometric bound for its exact discrete CD orbit. The same
argument then covers every fixed convex quadratic in finite dimension.

\subsection{The loss and the claimed rate}

Fix $0<\lambda\le L$ and consider the scalar loss
\[
 F_\lambda(x)=\frac{\lambda}{2}x^2,\qquad x\in\R.
\]
It is smooth, convex, coercive, and uniquely minimized at zero, with
$F_\lambda^*=0$ and $F_\lambda'(x)=\lambda x$. Run CD with the same
damping $\alpha>3$ and step $0<s<L^{-1}$ as in the main results, from
$x_0=x_1=R>0$. We will show that, for every fixed
$\theta\in(1-s\lambda,1)$, there is a constant $C>0$ such that
\[
 F_\lambda(x_n)\le C\theta^n\qquad(n\ge1).
\]
This is the geometric decay claimed here: the error is bounded by an
exponentially decreasing envelope, even if it oscillates between steps.
For example, $F(x)=x^2/2$, $\alpha=4$, $s=1/2$, and $x_0=x_1=1$
give $F(x_n)=O((3/4)^n)$.

\subsection{Why the exact CD recurrence contracts}

For this loss, the gradient step simply multiplies the query point by
$a:=1-s\lambda\in(0,1)$. Substituting $F_\lambda'(y_n)=\lambda y_n$
into the CD update gives
\[
 x_{n+1}=a y_n
 =a\bigl((1+\beta_n)x_n-\beta_n x_{n-1}\bigr),
 \qquad \beta_n=\frac{n-1}{n+\alpha-1}.
\]
Because momentum involves two successive iterates, we track their joint
state $u_n=(x_n,x_{n-1})^\mathsf{T}$. It satisfies
\[
 u_{n+1}=M_nu_n,\qquad
 M_n=\begin{pmatrix}a(1+\beta_n)&-a\beta_n\\1&0\end{pmatrix}
 \longrightarrow
 M=\begin{pmatrix}2a&-a\\1&0\end{pmatrix}.
\]
The key is to find one fixed norm in which all sufficiently late updates
contract. This is a standard adapted-norm argument
\citep[Chapter~5]{Horn_Johnson_2012}; here the norm can be written explicitly:
\[
 \|(z,w)\|_a^2:=(1-a)z^2+a(z-w)^2.
\]
Since $0<a<1$, this positive definite quadratic form defines a norm.
Indeed, it is the Euclidean norm after the invertible linear map
$(z,w)\mapsto(\sqrt{1-a}\,z,\sqrt a\,(z-w))$.
For $u=(z,w)^\mathsf T$, substitute
$Mu=(2az-aw,z)^\mathsf T$ and expand:
\begin{align*}
 \|Mu\|_a^2
 &=(1-a)(2az-aw)^2+a((2a-1)z-aw)^2\\
 &=az^2-2a^2zw+a^2w^2
 =a\bigl((1-a)z^2+a(z-w)^2\bigr)=a\|u\|_a^2.
\end{align*}
Thus the limiting update contracts by exactly $\sqrt a<1$ in this norm.
To transfer this contraction to the varying matrices, observe that
\[
 (M_n-M)u=(a(\beta_n-1)(z-w),0)^\mathsf T.
\]
The norm of $(t,0)$ is $|t|$, and $a(z-w)^2\le\|u\|_a^2$.
Consequently,
\begin{align*}
 \|(M_n-M)u\|_a
 &\le\sqrt a\,(1-\beta_n)\|u\|_a
 =\frac{\alpha\sqrt a}{n+\alpha-1}\|u\|_a,\\
 \|M_nu\|_a
 &\le\left(\sqrt a+\frac{\alpha\sqrt a}{n+\alpha-1}\right)\|u\|_a.
\end{align*}
Fix $\theta\in(a,1)$ and put $r=\sqrt\theta>\sqrt a$.
Choose $N\ge1$ with
$\alpha\sqrt a/(N+\alpha-1)\le r-\sqrt a$.
Then $\|M_nu\|_a\le r\|u\|_a$ for every $u$ and $n\ge N$.
Applying this inequality successively gives
\[
 \|u_n\|_a\le r^{n-N}\|u_N\|_a\qquad(n\ge N).
\]
Finally, $(1-a)x_n^2\le\|u_n\|_a^2$, so the state contraction controls
the loss itself:
\[
 F_\lambda(x_n)=\frac{\lambda}{2}x_n^2
 \le\frac{\lambda\|u_N\|_a^2}{2(1-a)}\,
       \theta^{n-N}\qquad(n\ge N).
\]
For example, take $C$ to be the maximum of
$\lambda\|u_N\|_a^2\theta^{-N}/[2(1-a)]$ and the finite set
$\{F_\lambda(x_n)\theta^{-n}:1\le n\le N\}$.
Then the claimed bound holds for every $n\ge1$.

\subsection{Finite-dimensional quadratics and the scope of the argument}

More generally, take
\[
 F(x)=F^*+\frac12(x-x^*)^\mathsf{T}Q(x-x^*),
\]
where $Q$ is a fixed symmetric positive semidefinite matrix whose
eigenvalues $\lambda_i$ lie in $[0,L]$. In an orthonormal eigenbasis,
write $z_{i,n}$ for the coordinates of $x_n-x^*$. Orthogonal change of
coordinates turns $\nabla F(x)=Q(x-x^*)$ into coordinatewise
multiplication by $\lambda_i$. Thus
\[
 z_{i,n+1}=(1-s\lambda_i)
       \bigl((1+\beta_n)z_{i,n}-\beta_n z_{i,n-1}\bigr),
\]
and the objective is
\[
 F(x_n)-F^*=\frac12\sum_{i=1}^d\lambda_i z_{i,n}^2.
\]
Zero eigenvalues contribute no error. If $Q\ne0$, let $\lambda_+$ be
its smallest positive eigenvalue. For any
$\theta\in(1-s\lambda_+,1)$, the scalar argument bounds each positive
term by $C_i\theta^n$: indeed, for $\lambda_i>0$,
$a_i=1-s\lambda_i\le1-s\lambda_+<\theta$.
The scalar proof applies to any finite initial state $u_1$, so no sign
restriction on these coordinates is needed. Summing the finitely many
bounds gives
\[
 F(x_n)-F^*\le\left(\sum_{i:\lambda_i>0}C_i\right)\theta^n.
\]
If $Q=0$, the error is identically zero.

Consequently, fixed finite-dimensional quadratics cannot attain the
nonzero polynomial asymptotic of \cref{thm:power}. This calculation
concerns quadratic losses themselves; it does not classify all smooth
convex losses satisfying a local quadratic growth inequality.

\section{Forward CD experiments on the fixed witnesses}\label{app:numerical-witnesses}

The experiments run CD on the scalar losses constructed in
Appendices~\ref{app:inverse} and~\ref{app:power}. They display repeated exceedances of the
subsequence lower bound and agreement with the predicted power constant.
The infinitely-many-indices and arbitrary-gain statements are established
by the proofs; the numerical runs cover the finite horizons stated below.
Appendix~\ref{app:supplement} describes the accompanying code, sampled
traces, numerical checks, and reproduction commands.

\subsection{Fixed losses and independent gradient queries}

Every run uses $L=1$, $s=1/2$, and $x_0=x_1=1$. Starting from the
inverse construction, set $\lambda=1/x_1$ and replace the unscaled loss
by $\lambda^2F(\,\cdot\,/\lambda)$; below, $F$ denotes this rescaled
loss, whose minimum value is zero. The derivative interpolant is fixed
before optimization. Gradient evaluations use the current forward query
point; constant-gradient intervals also permit the batched updates
described below. The prescribed gradient sequence and inverse-orbit
positions are used only for building the loss and checking the resulting run.

The query mesh incorporates the infinite weighted gradient tail.
For the block instance a geometric sum captures its leading part, with
an explicit bound on the remaining correction. For the power instances
Euler--Maclaurin summation supplies the tail and a remainder bound.
Additional nodes below the queried region are used to
integrate the objective. If $y_{\rm cut}$ is the last integration node,
omitting the area below it gives the truncation estimate
\[
 0\le F(x)-\widehat F(x)\le
 y_{\rm cut}F'(y_{\rm cut}),\qquad x\ge y_{\rm cut}.
\]
In fact, exact integration over the retained affine pieces gives
$\widehat F(x)=\int_{y_{\rm cut}}^xF'(u)\,du$, so
\[
 F(x)-\widehat F(x)=\int_0^{y_{\rm cut}}F'(u)\,du.
\]
The bound follows from
$0\le F'(u)\le F'(y_{\rm cut})$ on $[0,y_{\rm cut}]$.
It controls truncation of the analytic integral; floating-point
roundoff is assessed separately by the precision comparisons below.
All gradient evaluations remain inside the retained mesh or its prescribed
constant outer extension. The implementation stops if a query leaves
this region toward zero. No completion of the loss below the retained
mesh is needed for the runs.

The power runs use extended precision with 64 mantissa bits, with a
second run using float64 optimizer states on the same oracle. Their
largest sampled relative position discrepancy from the inverse-orbit
reference is $1.23\times10^{-17}$, and the largest relative objective
difference between precisions is $1.02\times10^{-13}$. The block run
uses arbitrary precision and batches of the discrete recurrence, as
described below. Precision comparisons are measured consistency checks,
separate from the analytic tail bounds and not interval-arithmetic
certificates.

\subsection{Theorem 1: repeated exceedances with a double-logarithmic gain}

We take $G(n)=\log\log(e^e+n)$ and $\alpha=4$. There is no slowest
divergent gain; this choice grows more slowly than every positive power
of $\log n$. To display several episodes, use the summable amplitudes
$a_j=(1-r)r^{j-1}$ with $r=0.999$, in the same block construction:
\[
 N_0=0,\qquad
 N_j=2^{\left\lceil\exp\!\bigl(2j r^{-2(j-1)}\bigr)/\log2\right\rceil},
 \qquad g_k=\frac{a_j}{N_j^2}\quad(2N_{j-1}<k\le2N_j).
\]
The weighted-summability and transition-slope estimates in
Appendix~\ref{app:no-gain-proof} apply unchanged to these amplitudes.
The scales satisfy the separation and first-scale conditions, and
$G(N_j)a_j^2\ge2j(1-r)^2$ for every $j$. Thus the single fixed loss
has the infinite sequence of bounds
\[
 \frac{N_j^2G(N_j)F(x_{N_j})}{K}\ge2j,
 \qquad K=\lambda^2\frac{se^{-\alpha}}6(1-r)^2>0.
\]
Here is the verification of the scale conditions. Write
$b_j=2j r^{-2(j-1)}$ and $q_j=e^{b_j}/\log2$, so that
$N_j=2^{\lceil q_j\rceil}$. Then
\[
 b_{j+1}-b_j
 =2r^{-2j}\bigl((j+1)-jr^2\bigr)>2,
 \qquad q_{j+1}>e^2q_j.
\]
Since $q_j\ge q_1=e^2/\log2>10$, this implies
$\lceil q_{j+1}\rceil-\lceil q_j\rceil\ge2$ and hence
$N_{j+1}\ge4N_j>2N_j$.
The first scale is $N_1=2^{11}=2048>30e^4
=3(\alpha+1)e^\alpha/(sL)$ for the stated parameters.
Moreover, $\log N_j\ge e^{b_j}$ implies
\[
 G(N_j)=\log\log(e^e+N_j)\ge\log\log N_j\ge b_j,
 \qquad G(N_j)a_j^2\ge2j(1-r)^2.
\]
The amplitudes satisfy $\sum_j a_j=1$ and
$c_{j+1}/c_j=r(N_j/N_{j+1})^2<1$, so the summability and monotonicity
arguments still apply. Multiplying the block lower bound by the
rescaling factor $\lambda^2$ and dividing by $K$ yields the displayed
bound $2j$.
The plotted reference is $K/[n^2G(n)]$; $K$ is determined by the
construction, without fitting the observed errors.

The first five selected indices are
\[
 (N_1,N_2,N_3,N_4,N_5)
 =(2^{11},2^{80},2^{597},2^{4513},2^{34437}).
\]
Since $\log_{10}N_5\approx10366.57$, executing every step separately
is impractical. We instead sum the discrete recurrence exactly on each
constant-gradient interval. For $g_n=c$ on $m\le n<k$, with $2\le m<k$, let
$A=sc/5$ and $H=(v_m-A(m+3))(m-1)m(m+1)(m+2)$. Then
\[
 \begin{aligned}
 v_k&=A(k+3)+\frac{H}{(k-1)k(k+1)(k+2)},\\
 x_k&=x_m-\frac A2(k-m)(k+m+7)
 -\frac H3\left(\frac1{m(m+1)(m+2)}
              -\frac1{k(k+1)(k+2)}\right).
 \end{aligned}
\]
To derive these updates, for $\alpha=4$ the velocity recurrence is
$v_{n+1}=[(n-1)/(n+3)]v_n+sc$.
The particular solution $v_n=A(n+3)$ satisfies it because $sc=5A$.
Subtracting this solution gives a homogeneous recurrence, and therefore
\begin{align*}
 v_k-A(k+3)
 &=(v_m-A(m+3))\prod_{i=m}^{k-1}\frac{i-1}{i+3}\\
 &=(v_m-A(m+3))
   \frac{(m-1)m(m+1)(m+2)}{(k-1)k(k+1)(k+2)}.
\end{align*}
For the position, $x_k=x_m-\sum_{j=m+1}^k v_j$ and
\begin{align*}
 \sum_{j=m+1}^k(j+3)&=\frac12(k-m)(k+m+7),\\
 \frac1{(j-1)j(j+1)(j+2)}
 &=\frac13\left(\frac1{(j-1)j(j+1)}
                 -\frac1{j(j+1)(j+2)}\right).
\end{align*}
The latter identity telescopes when summed over $m+1\le j\le k$,
giving the displayed position formula. At the initial index $m=1$,
the zero momentum coefficient instead gives $v_2=sc$ and
$x_2=x_1-sc$ directly.
Each batch starts from the current forward state and is accepted only
when its endpoint queries lie in the same constant part of the fixed
spatial derivative. Positivity makes the intervening queries monotone,
so they remain in that interval. Steps near joins are evaluated
individually. This computes the discrete orbit without replacing it
by a continuous-time approximation or resetting it to inverse positions.

The calculation through $2N_5$ uses $20{,}894$ decimal digits, $883$
batches, and $57$ individual steps. A repeat with $100$ additional
digits leaves the displayed values unchanged. A separate check through
iteration $5000$ compares batching with individual oracle steps; their
state difference is below $10^{-179}$. Six retained blocks suffice for
the spatial oracle and integration; the relative omitted-area bound is
below $10^{-128393}$. All tail and precision checks are recorded with
the code.

\Cref{fig:no-gain-experiment} uses $n/N_j$ to magnify each episode
and $G(N_j)$ to compress the global time scale. The five selected ratios
$N_j^2G(N_j)\widehat F(x_{N_j})/K$ are approximately
$200$, $394$, $590$, $786$, and $983$; every displayed window crosses
the reference from below. These are five observed episodes of the
same loss. The bound $2j$ above establishes their infinite continuation.

\begin{figure}[!htbp]
\centering
\includegraphics[width=\linewidth]{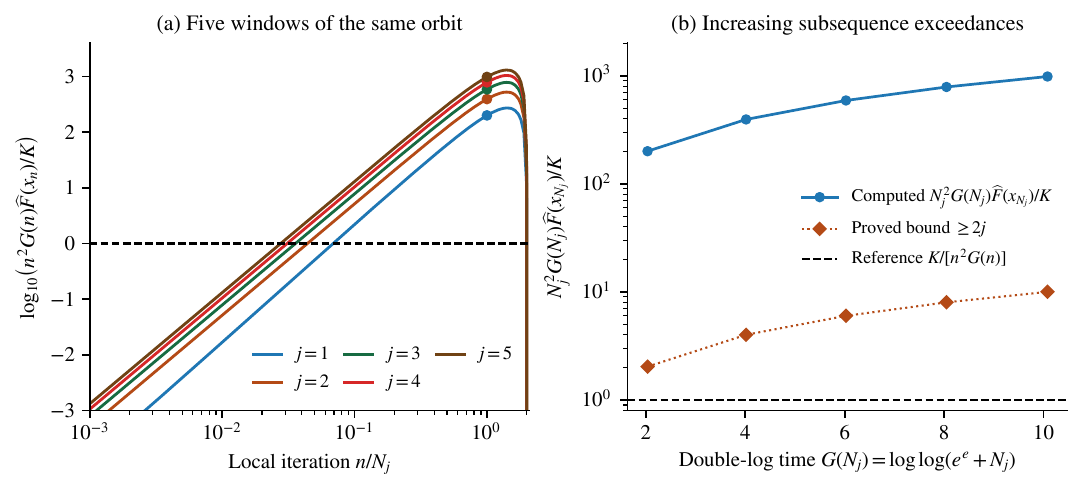}
\caption{Batched discrete CD on one fixed loss with
$G(n)=\log\log(e^e+n)$. Left: five windows of the same orbit, rescaled
by $N_j$; positive values mean that the error exceeds $K/[n^2G(n)]$.
Dots mark $n=N_j$. Right: selected ratios on a double-logarithmic time
axis, with their proved lower bounds. Connections between selected
points are visual guides. The dashed reference appears at $0$ on the
left because of the logarithm, and at $1$ on the right.}
\label{fig:no-gain-experiment}
\end{figure}

\subsection{Theorem 2: attaining the predicted power constant}

\begin{figure}[!htbp]
\centering
\includegraphics[width=\linewidth]{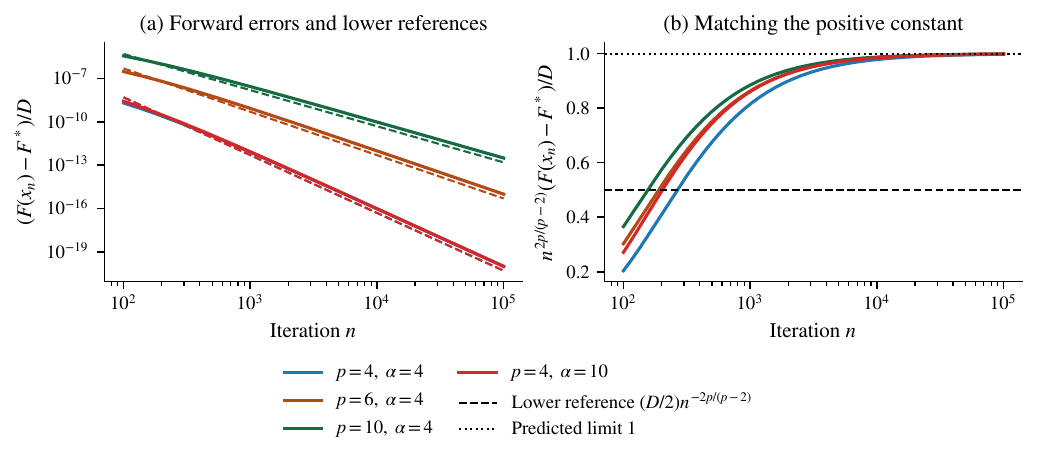}
\caption{Forward CD on four fixed power witnesses. Left: computed
errors divided by the predicted $D$, together with dashed references
$\tfrac12n^{-2p/(p-2)}$. Right: the normalized errors approach the
predicted limit $1$ and exceed $1/2$ throughout the reported observed
tails. The same normalizations are used for the reference lines in both
panels. These simulations evaluate the constructed interpolants directly.}
\label{fig:power-experiment}
\end{figure}

We use $g_n=(n+32)^{-\rho}$ with $\rho=2+2/(p-2)$.
The cases $p=4,6,10$ at $\alpha=4$ lie in the flatness regime.
An additional case $p=4$, $\alpha=10$ also satisfies the growth-only
damping condition in part~(ii); the constructed witness itself still
has $H(p)$. Each forward run reaches $n=10^5$. Objective integration
uses five million nodes, giving a maximum relative omitted-area bound
of $5.67\times10^{-4}$ across the four runs.

The normalization uses the analytic constant from
\cref{lem:power-leading}, including the spatial rescaling:
\[
 D=\lambda^2\frac{s}{2(\rho-1)(\alpha+1-\rho)}.
\]
No constant is fitted to the observed curve.

\Cref{fig:power-experiment} compares the computed errors with
$(D/2)n^{-2p/(p-2)}$, an eventual lower reference implied by the
positive asymptotic constant. The final normalized errors
$n^{2p/(p-2)}\widehat F(x_n)/D$ are respectively
$0.997791$, $0.998458$, $0.998677$, and $0.998471$.
In the tested ranges, this ratio stays above $1/2$ from indices
$267$, $190$, $156$, and $204$ onward, respectively.
The proof establishes the eventual lower bound for the full infinite
orbit; the runs show its finite-horizon manifestation.

\FloatBarrier

\section{Guide to the supplementary materials}\label{app:supplement}

The supplementary materials contain the Lean~4 development, construction
figure generators, and forward CD experiments. This appendix describes
their organization, gives reproduction commands, and specifies which
claims are formally checked and which are illustrated numerically.
All paths below are relative to the root of the accompanying materials.

\subsection{Contents and organization}

\begin{itemize}
\item \texttt{nag\_cd\_rate\_nonimprovability/} contains the Lean~4
sources, pinned toolchain and dependencies, and the public axiom audit.
Its theorem declarations and verification scope are described below.
\item \texttt{figures/} contains the generators for the two construction
schematics and the concrete instances in
Appendix~\ref{app:construction-instances}, together with generated plots
and numerical checks. The schematics explain the construction steps;
the concrete instances record recurrence checks and integration-tail
bounds. The proof dependency diagram is supplied as a separate TikZ source.
\item \texttt{experiments/fixed\_loss\_lower\_bounds/} contains the
fixed-loss gradient oracles, forward CD implementations, sampled traces,
plots, and numerical checks used in \cref{sec:experiments} and
Appendix~\ref{app:numerical-witnesses}. The Python runner
\texttt{reproduce.py} invokes \texttt{run\_cd.c} for individual CD steps;
\texttt{slow\_gain.py} computes the double-logarithmic-gain experiment
using arbitrary-precision batched updates. The script
\texttt{plot\_main.py} assembles the main-text experiment figure from
the saved data.
\end{itemize}
The accompanying README files describe the inputs, outputs, dependencies,
and recorded checks for each component.

\subsection{Scope of the Lean~4 formalization}\label{app:formal}

The Lean~4 development formalizes the no-universal-gain conclusion,
including every prescribed initial distance $R>0$, and the sharp
power-rate comparison.

The following declarations belong to the namespace
\texttt{NAGPointwiseRates} and support the corresponding conclusions of
\cref{thm:no-gain,thm:power}.
\begin{center}
\begin{tabular}{@{}p{.28\linewidth}>{\raggedright\arraybackslash}p{.68\linewidth}@{}}
\toprule
Conclusion & Lean~4 declaration \\
\midrule
Scalar no-gain with prescribed $R$ & \texttt{nagCD\_noPointwiseUniversalGain\_\allowbreak dimOne\_\allowbreak prescribedRadius} \\
Finite-dimensional no-gain with prescribed $R$ & \texttt{nagCD\_noPointwiseUniversalGain\_\allowbreak prescribedRadius} \\
Flatness--growth upper rate & \texttt{sharpPowerBoundary\_flatnessGrowth} \\
Growth-only upper rate & \texttt{sharpPowerBoundary\_growthOnly} \\
Fixed power witness & \texttt{sharpPowerBoundary\_noFurtherGain} \\
\bottomrule
\end{tabular}
\end{center}
The manuscript uses an affine block interpolant
for the no-gain construction and a shifted forcing sequence for the power example;
the formal development uses its own interpolants and a spliced exact power
tail. The correspondence is at the level of the specified theorem conclusions,
not an identification of these particular witness functions.
The intermediate lemmas and dependency diagram in the proof appendices organize
the manuscript proofs; their presence does not assert separate formal
verification of each displayed calculation.

\subsection{Rebuilding and auditing the Lean~4 development}

The package pins both Lean~4 and mathlib to version \texttt{v4.32.0}.
With the Lean~4 toolchain manager installed, run the following commands
from the root of the accompanying materials:
\begin{verbatim}
cd nag_cd_rate_nonimprovability
lake exe cache get
lake build
lake env lean AxiomAudit.lean
\end{verbatim}
The public audit reports only \texttt{propext},
\texttt{Classical.choice}, and \texttt{Quot.sound} for its 16 declarations.
Formal checking applies to the listed declarations.
The remaining manuscript arguments are supplied as mathematical proofs;
they are not claimed to have been checked by Lean~4.

\subsection{Reproducing the figures and numerical runs}

From the root of the accompanying materials, the following commands
create a Python environment, install the pinned numerical dependencies,
regenerate the construction figures, and rerun the forward experiments:
\begin{verbatim}
python3 -m venv .venv
.venv/bin/pip install -r \
  experiments/fixed_loss_lower_bounds/slow_gain_requirements.txt
.venv/bin/python figures/generate_constructions.py
make experiments PYTHON=.venv/bin/python
\end{verbatim}
The dependency file includes the requirements for both experiment engines
and the construction figures. The individual-step runner uses a C compiler
and requires \texttt{long double} with at least 64 mantissa bits; the
dependencies include a compiler fallback. The batched experiment repeats
its calculation at higher precision and compares a shorter run against
individual CD updates. The scripts record precision comparisons, source
information, and analytic bounds on omitted integration tails.

To redraw \cref{fig:main-experiments} from the supplied traces without
rerunning the simulations, use
\begin{verbatim}
make experiment-overview PYTHON=.venv/bin/python
\end{verbatim}
These numerical checks concern the reported finite runs. The analytic
tail bounds do not enclose every floating-point rounding error; the
infinite-time and arbitrary-gain conclusions are established by the
mathematical proofs.

\end{document}